\documentclass[a4paper,10pt]{article}
\usepackage[english]{babel}
\usepackage[utf8]{inputenc} 
\usepackage{amssymb,amsmath,url,xspace,amsthm}
\usepackage[all]{xy}
\usepackage{enumerate}
\usepackage{url}

\theoremstyle{plain}
\newtheorem{thm}{Theorem}[section]

\newtheorem{proposition}[thm]{Proposition}

\newtheorem{corollary}[thm]{Corollary}
\newtheorem{lemma}[thm]{Lemma}
\newtheorem{theorem}[thm]{Theorem}

\theoremstyle{definition}
\newtheorem{definition}[thm]{Definition}
\newtheorem*{remark}{Remark}

\newtheorem{assume}{Assumptions}
\newtheorem{example}[thm]{Example}
\addto\captionsfrench{}

\newcommand{\ov}{\overline}

\newcommand{\surj}{\twoheadrightarrow}

\newcommand{\X}{\mathfrak{X}}
\newcommand{\Z}{\mathbb{Z}}
\newcommand{\Q}{\mathbb{Q}}

\newcommand{\id}[1]{\mathfrak{#1}}
\newcommand{\Ko}{K}
\newcommand{\Hr}[1][n]{\mathcal{H}_{p^{#1}}}
\newcommand{\Hrg}[1][n]{H_{p^{#1}}}
\newcommand{\Hn}[1][n]{\mathcal{H}}
\newcommand{\Hng}[1][n]{H}

\newcommand{\A}[1][n]{\mathcal{A}_{p^{#1}}(\Ko)}

\newcommand{\Ub}{\overline{U}}

\newcommand{\wl}{w}
\newcommand{\tor}{\mathcal{T}_p}

\newcommand{\KoHr}[1][n]{\tilde{\Ko} \cap \Hr[#1] / \tilde{\Ko} \cap \Hr[]}

\newcommand{\UKn}[1][n]{\overline{U}_{\Ko}^{(p^{#1})}}

\newcommand{\Qn}[1][n]{\mathcal{Q}_{#1}}
\newcommand{\pari}{PARI/GP}
\DeclareMathOperator*{\gal}{Gal}
\DeclareMathOperator*{\aut}{Aut}
\DeclareMathOperator*{\ex}{exp}

\newcommand{\BibTeX}{{\scshape Bib}\kern-.08em\TeX}
\newcommand{\T}{\S\kern .15em\relax }
\newcommand{\AMS}{$\mathcal{A}$\kern-.1667em\lower.5ex\hbox
        {$\mathcal{M}$}\kern-.125em$\mathcal{S}$}

\tolerance 400
\pretolerance 200

\title{Computing the torsion of the $p$-ramified module of a number field}
\date {}
\author{Frédéric PITOUN and Firmin VARESCON}

\begin{document}
\renewcommand{\proofname}{Proof.}
\maketitle 

\textbf{Abstract.} We fix a prime number $p$ and a number field $\Ko$, and denote by $M$ the maximal abelian  $p$-extension of $\Ko$ unramified outside $p$. Our aim is to study the $\Z_p$-module $\X=\gal(M/\Ko)$ and to give a method to effectively compute its structure as a $\Z_p$-module. We also give numerical results, for real quadratic fields, cubic fields and quintic fields, together with their interpretations via Cohen-Lenstra heuristics.
 
\section{Introduction}
 
 We fix a prime number $p$ and a number field $\Ko$. We denote by $M$ the maximal abelian  $p$-extension of $\Ko$ unramified outside $p$. The aim of this paper is to study the $\Z_p$-module $\X=\gal(M/\Ko)$ and give an algorithm to compute its $\Z_p$-structure. This module is described by the exact sequence 
 \begin{equation}
    \label{se:Xo}
    \xymatrix{
 \Ub_{\Ko} \ar[r] & \prod_{v |p} U_v^1 \ar[r] & \X \ar[r] & \gal(\Hn/\Ko) \ar[r] & 1,
 }
  \end{equation}
from class field theory (\cite[p. 294]{gra2}), where $\overline{U_K}$ is the pro-$p$-completion of the group of units $U_K$, $U_v^1$ is the group of principal units at the place $v$ above $p$ of $\Ko$, and $\Hn$ is the maximal $p$-sub-extension of the Hilbert class field of $\Ko$. Leopoldt's conjecture for $\Ko$ and $p$ is equivalent to injectivity of $\Ub_{\Ko} \rightarrow \prod_{v |p} U_v^1$. Therefore, from this exact sequence, we deduce that the $\Z_p$-rank $r$ of $\X$ is greater or equal to $r_2+1$ and is equal $r_2+1$ if and only if $K$ and $p$ satisfy Leopoldt's conjecture. Hence $\X$ is the direct product of a free part isomorphic to $\Z_p^{r}$ and of a torsion part, that we denote by $\tor$. Our algorithm checks whether $\Ko$ satisfies Leopoldt's conjecture at $p$ and then computes the torsion $\tor$.\\
 
 We propose a method which is based on the fact that the $\Z_p$-module $\X$ is the projective limit of the $p$-parts of the ray class groups modulo $p^n$, $\A$. We then study the stabilization of these groups with respect to $n$ and the behaviour of invariants of $\A$, as $n$ is increasing. This approach leads us to our algorithm.\\
 
 Before addressing the technical part of this article, we recall the definition and some basic properties of the ray class groups modulo  $p^n$. Then, we use our algorithm to compute some cases and propose an heuristic explanation of the statistical data, using the Cohen-Lenstra philosophy (\cite{HCL}).\\

\section{Background from class field theory} 
In this section, we recall the basic notions from class field theory that we will need later. We fix $v$ a place of $\Ko$ above $p$ and $\pi_v$ a local uniformiser of $K_v$, the completion of $\Ko$ at $v$. We use \cite{gra2} and \cite{serre} as main references.

\begin{definition}\begin{enumerate} ~
\item The conductor of an abelian extension of local fields $L_v/K_v$ is the minimum of integers $c$ such that $U_v^c \subset N_{L_v/K_v}(L_v^\times)$ (we recall that $U_v^c=1+(\pi_v^c)$ and we use the convention $U_v^0=U_v$).
\item(Theorem and Definition 4.1 + Lemma 4.2.1 \cite{gra2} p. 126-127)
The conductor of an abelian extension $L/K$ of a global field is the ideal $\id{m}=\prod_{v} \id{p}_v^{c_v}$, where $v$ runs through all finite places of $K$ and where $c_v$ is the conductor of the local extension $L_v/K_v$.
\end{enumerate}
\end{definition}

We start with two lemmas.
\begin{lemma}[Proposition 9 p. 219 \cite{serre}] \label{lemmeserre}
Let $K_v$ be the completion of $K$ at the valuation $v$ normalized by $v(p)=1$ and $v(\pi_v)=\frac{1}{e_v}$, where $e_v$ is the ramification index of the extension $K_v/\Q_p$. If $m > \frac{e_v}{p-1}$, then the map $x \mapsto x^p$ is an isomorphism from $U_v^{m}$ to $U_v^{m+e_v}$.
\end{lemma}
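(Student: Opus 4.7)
The plan is to use the $p$-adic logarithm $\log(1+y)=y-y^2/2+y^3/3-\cdots$, defined on $U_v^1$. The hypothesis $m>e_v/(p-1)$ will force this series to restrict to a group isomorphism $\log:U_v^m\xrightarrow{\sim}\pi_v^m\mathcal{O}_{K_v}$ (from the multiplicative group to the additive group of the ideal), under which the $p$-th power map transports to multiplication by $p$. The target $U_v^{m+e_v}$ will then correspond to $p\cdot\pi_v^m\mathcal{O}_{K_v}=\pi_v^{m+e_v}\mathcal{O}_{K_v}$, since $v(p)=1$ forces $p$ and $\pi_v^{e_v}$ to differ by a unit in $\mathcal{O}_{K_v}$.

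First I would verify that $\log$ is well-defined on $U_v^m$ with image contained in $\pi_v^m\mathcal{O}_{K_v}$. For $x=1+y\in U_v^m$, so $v(y)\geq m/e_v>1/(p-1)$, one must compare $v(y^k/k)=k\,v(y)-v(k)$ with $v(y)$ for each $k\geq 2$. The required strict inequality $(k-1)v(y)>v(k)$ reduces, via the elementary bound $v(k)\leq \log_p k$ (writing $k=p^a k'$ with $p\nmid k'$, one has $v(k)=a$ and $(k-1)/(p-1)\geq (p^a-1)/(p-1)=1+p+\cdots+p^{a-1}\geq a$), to the standing hypothesis $v(y)>1/(p-1)$. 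So the linear term strictly dominates and $\log(1+y)\in\pi_v^m\mathcal{O}_{K_v}$. The $p$-adic exponential $\exp(z)=\sum z^k/k!$ provides the inverse on $\pi_v^m\mathcal{O}_{K_v}$; its convergence follows from the analogous estimate $v(k!)=(k-s_p(k))/(p-1)\leq (k-1)/(p-1)$, and the identities $\exp\circ\log=\mathrm{id}$ and $\log\circ\exp=\mathrm{id}$ come from the formal power series identities.

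Once the isomorphism $\log:U_v^m\xrightarrow{\sim}\pi_v^m\mathcal{O}_{K_v}$ is established, the rest is immediate: since $\log$ is a group homomorphism, $\log(x^p)=p\log x$, so the $p$-th power map transports to multiplication by $p$. As $v(p)=1$ we have $p\mathcal{O}_{K_v}=\pi_v^{e_v}\mathcal{O}_{K_v}$, whence $p\cdot\pi_v^m\mathcal{O}_{K_v}=\pi_v^{m+e_v}\mathcal{O}_{K_v}$. Multiplication by $p$ is visibly an additive group isomorphism between these two ideals, and applying $\exp$ (valid on $\pi_v^{m+e_v}\mathcal{O}_{K_v}$ as well, since $m+e_v>e_v/(p-1)$) transports this back to the multiplicative isomorphism $U_v^m\xrightarrow{\sim}U_v^{m+e_v}$ given by $x\mapsto x^p$.

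The main obstacle is the valuation bookkeeping for the logarithm and exponential series, namely verifying the sharp inequality $(k-1)v(y)>v(k)$ which makes the linear term strictly dominate all others in $\log(1+y)$ precisely when $m>e_v/(p-1)$, together with the matching estimate for the terms of $\exp$. Once these are in hand, everything else is a formal transport of structure along $\log$.
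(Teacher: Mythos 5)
The paper gives no proof of this lemma---it is quoted verbatim from Serre (\emph{Corps locaux}, Prop.~9, p.~219), where the argument is precisely the one you give: transport the $p$-th power map along the $\log$/$\exp$ isomorphism $U_v^m\simeq \pi_v^m\mathcal{O}_{K_v}$ (valid for $m>e_v/(p-1)$), where it becomes multiplication by $p$, carrying $\pi_v^m\mathcal{O}_{K_v}$ onto $\pi_v^{m+e_v}\mathcal{O}_{K_v}$. Your valuation estimates $(k-1)v(y)>v(k)$ and $v(k!)\le (k-1)/(p-1)$ are correct and are exactly what makes the hypothesis $m>e_v/(p-1)$ sharp, so the proposal is a complete and correct proof in essentially the same spirit as the cited source.
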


\begin{lemma} \label{lemme_cond_loc}
    Let $K_v \subset L_v \subset M_v$ be a tower of  extensions of $\Q_p$,  such that the extension $M_v/K_v$ is abelian and the extension $M_v/L_v$ is of degree $p$. We denote respectively by $c_{M,v}$ and $c_{L,v}$ the conductors of the extensions $M_v/K_v$ and $L_v/K_v$. If  $c_{L,v} > \frac{e_v}{p-1}$, then we have
\begin{equation} \label{inq}
c_{M,v} \leq c_{L,v} +e_v.
\end{equation}

  \end{lemma}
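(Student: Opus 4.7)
\emph{Proof plan.} The plan is to show directly that $U_v^{c_{L,v}+e_v}\subset N_{M_v/K_v}(M_v^\times)$, which by the definition of the conductor forces $c_{M,v}\leq c_{L,v}+e_v$. The idea is to factor any element of $U_v^{c_{L,v}+e_v}$ as a $p$-th power in $U_v^{c_{L,v}}$, pull it back along the norm from $L_v$ using the conductor hypothesis on $L_v/K_v$, and then use that its inverse image is a $p$-th power to get it into $N_{M_v/L_v}(M_v^\times)$ for free.

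More concretely, first I would observe that $M_v/L_v$ is abelian of degree $p$: it is a sub-extension of the abelian extension $M_v/K_v$, hence Galois with cyclic quotient. Local class field theory then gives $[L_v^\times : N_{M_v/L_v}(M_v^\times)]=p$, so $(L_v^\times)^p \subset N_{M_v/L_v}(M_v^\times)$. Next, I would pick any $u\in U_v^{c_{L,v}+e_v}$. Since by hypothesis $c_{L,v}>\tfrac{e_v}{p-1}$, Lemma \ref{lemmeserre} applies with $m=c_{L,v}$ and yields some $y\in U_v^{c_{L,v}}$ with $y^p=u$. By the definition of the conductor $c_{L,v}$ of $L_v/K_v$, this $y$ lies in $N_{L_v/K_v}(L_v^\times)$, so $y=N_{L_v/K_v}(z)$ for some $z\in L_v^\times$.

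Combining these, $u=y^p=N_{L_v/K_v}(z)^p=N_{L_v/K_v}(z^p)$. By the first observation, $z^p\in N_{M_v/L_v}(M_v^\times)$. By transitivity of norms,
\[
u=N_{L_v/K_v}\bigl(N_{M_v/L_v}(\tilde z)\bigr)=N_{M_v/K_v}(\tilde z)
\]
for some $\tilde z\in M_v^\times$ with $N_{M_v/L_v}(\tilde z)=z^p$. Hence $U_v^{c_{L,v}+e_v}\subset N_{M_v/K_v}(M_v^\times)$, giving the bound $c_{M,v}\leq c_{L,v}+e_v$ claimed in \eqref{inq}.

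The argument is essentially a bookkeeping exercise once the three ingredients (Serre's $p$-th power isomorphism, the conductor bound for $L_v/K_v$, and the fact that any $p$-th power is a local norm from a degree-$p$ extension) are in place. The only subtle point is checking that the hypothesis $c_{L,v}>\tfrac{e_v}{p-1}$ is exactly what lets us apply Lemma \ref{lemmeserre} to deduce that every element of $U_v^{c_{L,v}+e_v}$ is a $p$-th power with $p$-th root in $U_v^{c_{L,v}}$ and not just in some smaller group; the rest is a direct computation with norms.
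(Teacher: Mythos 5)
Your proof is correct and follows essentially the same route as the paper: extract a $p$-th root inside $U_v^{c_{L,v}}$ via Lemma \ref{lemmeserre}, use the conductor hypothesis to place it in $N_{L_v/K_v}(L_v^\times)$, and then use a degree-$p$ index argument to push its $p$-th power into $N_{M_v/K_v}(M_v^\times)$. The only cosmetic difference is that the paper applies the snake lemma to the two local reciprocity sequences to see that $N_{M_v/K_v}(M_v^\times)$ has index $p$ inside $N_{L_v/K_v}(L_v^\times)$ and concludes directly in $K_v^\times$, whereas you work one level up in $L_v^\times$ via $(L_v^\times)^p\subset N_{M_v/L_v}(M_v^\times)$ and transitivity of norms.
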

  \begin{proof}

By definition $c_{L,v}$ is the smallest integer $n$ such that $U_v^{n} \subset N_{L_v/K_v}(L_v^\times)$. Local class field theory gives the diagram
$$
\xymatrix{
1 \ar[r] & N_{M_v/K_v}(M_v^\times) \ar[r] \ar@{^(->}[d]& K_v^\times \ar[r] \ar@{=}[d] & \gal(M_v/K_v) \ar[r] \ar@{->>}[d] & 1 \\
1 \ar[r] & N_{L_v/K_v}(L_v^\times) \ar[r] & K_v^\times \ar[r] & \gal(L_v/K_v) \ar[r] & 1 \\
}
$$
Applying the snake lemma we get the exact sequence
$$
\xymatrix{
1 \ar[r] & N_{M_v/K_v}(M_v^\times) \ar[r] &  N_{L_v/K_v}(L_v^\times) \ar[r] & \gal(M_v/L_v)=\Z/p\Z \ar[r] & 1.
}
$$
Consequently $N_{M_v/K_v}(M_v^\times)$ is a subgroup of $N_{L_v/K_v}(L_v^\times)$ of index $p$.
Let $n \in \mathbb{N},n \geq c_{L,v}+e_v$ and  $x \in U_v^{n}$. We have to show that $x \in N_{M_v/K_v}(M_v^\times)$.
By Lemma \ref{lemmeserre}, $x^\frac{1}{p}$ is a well-defined element of $U_v^{n-e_v}$. Yet $n-e_v \geq c_{L,v}$ therefore $x^\frac{1}{p} \in N_{L_v/K_v}(L_v^\times)$. Now, as $ N_{M_v/K_v}(M_v^\times)$ is of index $p$ in $N_{L_v/K_v}(L_v^\times)$, we deduce that $x \in  N_{M_v/K_v}(M_v^\times)$. We have therefore that $U_v^{n} \subset N_{M_v/K_v}(M_v^\times)$ for all integers $n$ such that $n \geq c_{L,v}+e_v$. By the definition of the conductor, this proves that \ref{inq}.

\end{proof}

\begin{definition} Let $n$ be a positive integer.
  We denote by
  \begin{itemize}
  \item[$\bullet$] $\Hng$ the maximal abelian unramified extension of $\Ko$;
  \item[$\bullet$] $\Hrg$ the compositum of all abelian extensions of $\Ko$ whose conductors divide $p^n$;
  \item[$\bullet$] $\Hr$ the compositum of all abelian $p$-extensions of $\Ko$ whose conductors divide $p^n$;
  \item[$\bullet$] $M$ the maximal extension of $\Ko$ which is abelian and unramified outside~$p$.
  \end{itemize}
So the Galois groups $\gal(\Hn/\Ko)$ and $\gal(\Hr/\Ko)$ are respectively isomorphic to the $p$-parts of $\gal(\Hng/\Ko)$ and $\gal(\Hrg/\Ko)$.\\
\end{definition}

\begin{proposition}[Corollary 5.1.1 p. 47 \cite{gra2}]
  We have the exact sequences
$$
\xymatrix{
1 \ar[r] & K^\times \prod_{v \nmid p} U_v \prod_{v |p} U_v^{ne_v} \ar[r] & \mathcal{I}_{\Ko} \ar[r] & \gal(\Hrg/\Ko) \ar[r] & 1
}
$$
$$
\xymatrix{
1 \ar[r] & K^\times \prod_{v } U_v  \ar[r] & \mathcal{I}_{\Ko} \ar[r] & \gal(\Hng/\Ko) \ar[r] & 1,
}
$$
where $\mathcal{I}_{\Ko}$ is the group of id\`eles of $\Ko$.

\end{proposition}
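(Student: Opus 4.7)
The plan is to derive both exact sequences as direct consequences of the global Artin reciprocity map, combined with the conductor characterization recalled in the first definition of the section.

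First, I would invoke the main theorem of global class field theory: for any finite abelian extension $L/\Ko$, the Artin reciprocity map yields an exact sequence
\[
1 \longrightarrow \Ko^\times D_{L/\Ko} \longrightarrow \mathcal{I}_{\Ko} \longrightarrow \gal(L/\Ko) \longrightarrow 1,
\]
where $D_{L/\Ko} = \prod_v N_{L_w/K_v}(L_w^\times)$ is the global norm subgroup. Passing to a projective limit over finite layers (or, for $\Hrg$ and $\Hng$, simply noting that these are themselves finite abelian extensions with bounded ramification), the same formalism applies.

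Next, I would translate the conductor condition into a norm-group condition. By local class field theory, and by the definition of the conductor recalled at the start of this section, the local extension $L_w/K_v$ has conductor dividing $\id{p}_v^{c_v}$ if and only if $U_v^{c_v} \subset N_{L_w/K_v}(L_w^\times)$. Consequently, a global abelian extension $L/\Ko$ has conductor dividing $\id{m} = \prod_v \id{p}_v^{c_v}$ if and only if $\prod_v U_v^{c_v} \subset D_{L/\Ko}$, i.e. if and only if $\Ko^\times \prod_v U_v^{c_v} \subset \Ko^\times D_{L/\Ko}$.

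I would then apply this to $\id{m} = (p^n) = \prod_{v \mid p} \id{p}_v^{n e_v}$ (the latter factorisation coming from $v(p) = e_v$). Since $\Hrg$ is by construction the compositum, hence the maximal, abelian extension of $\Ko$ whose conductor divides $(p^n)$, the Galois correspondence between finite abelian extensions of $\Ko$ and open finite-index subgroups of $\mathcal{I}_{\Ko}/\Ko^\times$ forces the kernel of $\mathcal{I}_{\Ko} \twoheadrightarrow \gal(\Hrg/\Ko)$ to be the minimal open subgroup satisfying the above containment, namely
\[
\Ko^\times \prod_{v \nmid p} U_v \prod_{v \mid p} U_v^{n e_v},
\]
which yields the first exact sequence. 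Specialising to $\id{m} = (1)$, so that conductor dividing $\id{m}$ means unramified at every finite place, produces the second sequence with kernel $\Ko^\times \prod_v U_v$.

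The only real subtlety lies in the last step: passing from the containment $\Ko^\times \prod_v U_v^{c_v} \subset \Ko^\times D_{\Hrg/\Ko}$ to the equality that is needed for exactness. This requires that the open subgroup $\Ko^\times \prod_{v \nmid p} U_v \prod_{v \mid p} U_v^{n e_v}$ already be the norm group of \emph{some} abelian extension of conductor dividing $(p^n)$, and then maximality of $\Hrg$ forces equality. Both inputs are standard (the existence theorem of class field theory together with the definition of $\Hrg$ as a compositum), so once they are in place the deduction is purely formal.
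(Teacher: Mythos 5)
Your argument is sound, but note that the paper does not prove this proposition at all: it is quoted verbatim as Corollary 5.1.1 of Gras's \emph{Class Field Theory}, so there is no internal proof to compare against. What you have written is precisely the standard derivation that underlies that corollary: Artin reciprocity identifies $\ker\bigl(\mathcal{I}_{\Ko}\to\gal(L/\Ko)\bigr)$ with $\Ko^\times D_{L/\Ko}$ for a finite abelian $L/\Ko$, local--global compatibility of the Artin maps translates ``conductor divides $\id{m}$'' into ``$\prod_v U_v^{c_v}$ lies in the kernel,'' and the existence theorem guarantees that the open subgroup $\Ko^\times \prod_{v\nmid p}U_v\prod_{v\mid p}U_v^{ne_v}$ is itself a norm group, so that maximality of $\Hrg$ upgrades the containment to equality. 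You correctly isolate that last step as the only non-formal input. Two small points worth making explicit if you wrote this out in full: (i) the passage from $\Ko^\times\prod_v U_v^{c_v}\subset \Ko^\times D_{L/\Ko}$ back to the local containments $U_v^{c_v}\subset N_{L_w/K_v}(L_w^\times)$ is not a formal group-theoretic implication but uses that the restriction of the global Artin map to $K_v^\times$ is the local Artin map; and (ii) one must fix a convention at the archimedean places (in Gras's book $U_v$ is taken to be all of $K_v^\times$, or its connected component, for $v$ infinite), since otherwise the second sequence computes the narrow rather than the ordinary class group, and $\Hng$ is here the maximal abelian extension unramified everywhere. Neither point is a gap in substance, only in bookkeeping.
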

  We denote the Galois group $\gal(\Hr/\Ko)$ by $\A$. It is the $p$-part of the Galois group $\gal(\Hrg/\Ko)$ which, in turn, is isomorphic to the ray class group modulo $p^n$ of $\Ko$.
By definition, we have a natural inclusion $\Hr \subset \Hr[n+1]$, the union $\displaystyle \bigcup_{n} \Hr$ is equal to $M$ and the projective limit $\varprojlim \A$ is canonically isomorphic to $\X$.

\begin{proposition} \label{seX0}
  For any integer $n>0$, the Galois groups of the extensions $M$ and $\Hrg$ of $\Ko$ are related by the exact sequence
$$
\xymatrix{
1 \ar[r] & U_{\Ko}^{(p^n)} \ar[r] & \prod_{v|p} U_v^{ne_v} \ar[r] & \gal(M/\Ko) \ar[r] & \gal(\Hrg/\Ko) \ar[r]  & 1,
}
$$
where $U_{\Ko}^{(p^n)}=\{u \in U_{\Ko} \text{ such that }  \forall v |p,\ \ u \in U_v^{ne_v}\}$ and

$$\xymatrix{
 \UKn \ar[r] & \prod_{v|p} U_v^{ne_v} \ar[r] & \X \ar[r] & \A \ar[r] & 1,
}$$
where $\Ub_{\Ko}^{(p^n)}$ is the pro-$p$-completion of $U_{\Ko}^{(p^n)}$, i.e $\displaystyle \varprojlim_m U_{\Ko}^{(p^n)}/p^m$. If moreover $\Ko$ and $p$ satisfy Leopoldt's conjecture, then $\UKn \rightarrow \prod_{v|p} U_v^{ne_v}$ is injective.
\end{proposition}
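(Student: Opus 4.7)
The plan is to derive the first exact sequence from the idelic formulation of class field theory, and then to pass to $p$-primary parts for the second.

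By class field theory, the maximal abelian extension $M$ of $\Ko$ unramified outside $p$ corresponds to the subgroup $K^\times \prod_{v \nmid p} U_v$ of the idele group $\mathcal{I}_\Ko$; combined with the expression of $\gal(\Hrg/\Ko)$ from the previous proposition, the natural surjection $\gal(M/\Ko) \twoheadrightarrow \gal(\Hrg/\Ko)$ has kernel
$$\bigl(K^\times \prod_{v \nmid p} U_v \prod_{v|p} U_v^{ne_v}\bigr)\big/\bigl(K^\times \prod_{v \nmid p} U_v\bigr).$$
By the second isomorphism theorem this kernel identifies with $\prod_{v|p} U_v^{ne_v}$ modulo its intersection with $K^\times \prod_{v \nmid p} U_v$ inside $\mathcal{I}_\Ko$. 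A direct inspection shows that an idele lying in both subgroups must be the diagonal image of a single $a \in K^\times$ which is a unit at every finite place and satisfies $a \in U_v^{ne_v}$ for every $v|p$, that is, $a \in U_\Ko^{(p^n)}$. This produces the first four-term exact sequence.

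For the $p$-primary version, observe that for $n \geq 1$ each $U_v^{ne_v}$ sits inside $U_v^1$ and is therefore a pro-$p$ group, so the image of $\prod_{v|p} U_v^{ne_v}$ in $\gal(M/\Ko)$ lands in its $p$-primary quotient $\X$. Since the decomposition of a profinite abelian group into $\ell$-primary parts is an exact functor, taking $p$-parts of the first sequence yields $\prod_{v|p} U_v^{ne_v}/(\text{image of } U_\Ko^{(p^n)}) \hookrightarrow \X \twoheadrightarrow \A \to 1$. The map from $U_\Ko^{(p^n)}$ has pro-$p$ target, hence factors uniquely through the pro-$p$-completion $\UKn$ with unchanged image, giving $\UKn \to \prod_{v|p} U_v^{ne_v} \to \X \to \A \to 1$.

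Finally, $U_\Ko^{(p^n)}$ has finite index in the finitely generated abelian group $U_\Ko$ (being the kernel of the natural map into the finite group $\prod_{v|p} U_v/U_v^{ne_v}$), so by flatness of $\Z_p$ over $\Z$ the inclusion induces an injection $\UKn = U_\Ko^{(p^n)} \otimes_\Z \Z_p \hookrightarrow \Ub_\Ko = U_\Ko \otimes_\Z \Z_p$. Leopoldt's conjecture for $(\Ko,p)$ is precisely the injectivity of $\Ub_\Ko \to \prod_{v|p} U_v^1$; since the image of $\UKn$ in $\Ub_\Ko$ lies in the subgroup $\prod_{v|p} U_v^{ne_v}$, the composition $\UKn \hookrightarrow \prod_{v|p} U_v^{ne_v}$ is then injective. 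The main bookkeeping will be the explicit computation of the idelic intersection; once that is handled, the rest is formal.
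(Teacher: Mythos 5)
Your argument follows essentially the same route as the paper: both derive the first sequence from the idelic descriptions of $\gal(M/\Ko)$ and $\gal(\Hrg/\Ko)$ and identify the kernel of the surjection with $\bigl(\prod_{v|p}U_v^{ne_v}\bigr)/U_{\Ko}^{(p^n)}$ --- the paper does this by writing down an explicit map $\theta$ and checking its kernel, while you invoke the second isomorphism theorem, but the ``idelic intersection'' computation you defer is precisely the paper's verification that $\ker\theta = K^\times\prod_{v\nmid p}U_v\prod_{v|p}1$, and your sketch of it (a global element that is a unit at all finite places and lies in $U_v^{ne_v}$ for $v|p$) is correct. Both proofs then pass to the pro-$p$ level for the second sequence; your final paragraph on injectivity under Leopoldt is actually more detailed than the paper, which merely asserts that this injectivity is equivalent to the conjecture.
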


\begin{proof} To obtain the second exact sequence, we apply the pro-$p$-completion process to the first. Note that the injectivity of $\UKn \rightarrow \prod_{v|p} U_v^{ne_v}$ is equivalent to Leopoldt's conjecture. Now we prove exactness of the first sequence.\\
From the definition of the extensions $M$ and $\Hrg$, we deduce the commutative diagram
$$\xymatrix{
1 \ar[r] &\Ko^\times \prod_{v \nmid p} U_v \prod_{v|p} 1  \ar[r] \ar@{^(->}[d] & \mathcal{I}_{\Ko} \ar[r] \ar@{=}[d] &\gal(M/\Ko) \ar[r]  \ar@{->>}[d] & 1\\
1 \ar[r] & \Ko^\times \prod_{v \nmid p} U_v \prod_{v |p} U_v^{ne_v}  \ar[r] & \mathcal{I}_{\Ko} \ar[r] & \gal(\Hrg/\Ko)  \ar[r] & 1.
}$$

By the snake lemma, we have that

$$\ker(\gal(M/\Ko) \to  \gal(\Hrg/\Ko)) = (K^\times \prod_{v \nmid p} U_v \prod_{v |p} U_v^{ne_v})/ (K^\times \prod_{v \nmid p} U_v \prod_{v|p} 1).$$

Now, we define the map
$$
\theta: (K^\times \prod_{v \nmid p} U_v \prod_{v |p} U_v^{ne_v}) \to (\prod_{v|p} U_v^{ne_v}) / U_{\Ko}^{(p^n)},
$$

by setting for $k(u_v)_v \in K^\times \prod_{v \nmid p} U_v \prod_{v |p} U_v^{ne_v}$,
$ \theta(k(u_v)_v)=\ov{(u_v)_{v|p} }$, where $\ov{(u_v)_{v|p}}$ is the class of $(u_v)_{v|p}$ in $(\prod_{v|p} U_v^{ne_v})/  U_{\Ko}^{(p^n)}$.\\

We first check that the map $\theta$ is well defined, i.e. that if $k(u_v)_v=k'(u_v')_v$ in $K^\times\prod_{v \nmid p} U_v \prod_{v |p} U_v^{ne_v}$, then $\theta(k(u_v)_v)=\theta(k'(u_v')_v)$. By definition, for all $v$, $k(u_v)_v=k'(u_v')_v$ if and only if $i_v(k)u_v=i_v(k')u_v'$, where $i_v$ is the embedding of $K$ in $K_v$. We deduce that  for all $v$, $i_v(k'k^{-1}) \in U_v$ and that for all $v|p$, $i_v(k'k^{-1}) \in U_v^{ne_v}$. So we get $k'k^{-1} \in U_{\Ko}^{(p^n)}$ and $\ov{(u_v)_{v|p}}=\ov{(u_v')_{v|p}}$.\\

 It is clear that $(K^\times \prod_{v \nmid p} U_v \prod_{v|p} 1) \subset \ker(\theta)$ and that the map $\theta$ is surjective. We will show that $(K^\times \prod_{v \nmid p} U_v \prod_{v|p} 1) = \ker(\theta)$. Let $k(u_v) \in \ker(\theta)$. Then there exists an $x \in U_{\Ko}^{(p^n)}$ such that for all $v|p$, $u_v=i_v(x)$. We consider the element $x(u_v')_v$, where $u_v'=1$ if $v|p$ and $u_v'=i_v(x)^{-1}u_v$ if $v \nmid p$. We have $(u_v)_v=x(u_v')_v \Rightarrow k(u_v)_v=kx(u_v')_v$ and as $kx(u_v')_v \in (K^\times \prod_{v \nmid p} U_v \prod_{v|p} 1)$, we have  $\ker(\theta) \subset (K^\times \prod_{v \nmid p} U_v \prod_{v|p} 1) $ and finally
$$
(K^\times \prod_{v \nmid p} U_v \prod_{v |p} U_v^{ne_v})/ (K^\times \prod_{v \nmid p} U_v \prod_{v|p} 1) \simeq  (\prod_{v|p} U_v^{ne_v})/ U_{\Ko}^{(p^n)}.
$$
We deduce the first exact sequence.
\end{proof}

\section{Explicit Computation of  $\tor$}
In this section, we present our method to check that $\Ko$ satisfies Leopoldt's conjecture at $p$ and then to compute $\tor$. The main point is that, for $n$ large enough, $\A$ determines $\X$.

\subsection{Stabilization of $\A$}
For simplicity we denote $Y_n=\ker(\A[n+1] \to \A[n])$. Let $\tilde{\Ko}$ be the compositum of all the $\Z_p$-extensions of $\Ko$. 
We denote by $r$ the $\Z_p$-rank of $\X$, so that $r \geq r_2+1$.

\begin{proposition} \label{surjKCr}
There exists an $n_0$ such that $\KoHr[n_0]$ is ramified at all places above $p$.  Also, for all $n \geqslant n_0$,  $ Y_n$ surjects onto $ (\Z/p\Z)^{r}$.
\end{proposition}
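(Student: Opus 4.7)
The plan is to work inside $\gal(\tilde{\Ko}/\Ko)$, which is the maximal torsion-free quotient of $\X$ and hence free of rank $r$ over $\Z_p$. Set $N_n := \gal(\tilde{\Ko}/\tilde{\Ko}\cap\Hr)$; this is a closed subgroup of $\gal(\tilde{\Ko}/\Ko) \cong \Z_p^r$ of finite index, because $\tilde{\Ko}\cap\Hr/\Ko$ is finite, and is therefore (abstractly) isomorphic to $\Z_p^r$. Since every $\Z_p$-extension of $\Ko$ is unramified outside $p$, one has $\tilde{\Ko} \subset M = \bigcup_n \Hr$, so $\tilde{\Ko} = \bigcup_n \tilde{\Ko}\cap\Hr$ and $\bigcap_n N_n = 1$. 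By class field theory, $N_n$ identifies with the image of $\prod_{v|p} U_v^{ne_v}$ in $\gal(\tilde{\Ko}/\Ko)$ under the id\`ele reciprocity map.

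For the first assertion, the cyclotomic $\Z_p$-extension of $\Ko$ sits inside $\tilde{\Ko}$ and is ramified at every $v|p$, because it contains the totally ramified cyclotomic $\Z_p$-extension of $\Q$. Hence the inertia subgroup $I_v \subset \gal(\tilde{\Ko}/\Ko)$ is nontrivial for every $v|p$. From $\bigcap_n N_n = 1$ one deduces that for each such $v$ the image of $I_v$ in $\gal(\tilde{\Ko}/\Ko)/N_n$ is nontrivial once $n$ is large enough; choosing $n_0$ so as to work simultaneously for the finitely many places above $p$, and noting that $\tilde{\Ko}\cap\Hr[0] = \tilde{\Ko}\cap\Hng$ is the maximal subextension of $\tilde{\Ko}$ unramified everywhere, we conclude that $\tilde{\Ko}\cap\Hr[n_0]/\tilde{\Ko}\cap\Hng$ is ramified at every $v|p$.

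For the second assertion, enlarge $n_0$ if necessary so that $n_0 e_v > e_v/(p-1)$ holds for every $v|p$. Then by Lemma \ref{lemmeserre} the $p$-th power map $U_v^{ne_v} \to U_v^{(n+1)e_v}$ is an isomorphism for every $n \geq n_0$ and every $v|p$. Passing to reciprocity images gives $N_{n+1} = pN_n$ inside $N_n \cong \Z_p^r$, so $N_n/N_{n+1} \cong (\Z/p\Z)^r$. The restriction map $\A[n+1] \surj \gal(\tilde{\Ko}\cap\Hr[n+1]/\Ko)$ sends $Y_n = \ker(\A[n+1]\to\A)$ onto the kernel $N_n/N_{n+1}$, yielding the desired surjection $Y_n \surj (\Z/p\Z)^r$.

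The main obstacle is the identification of $N_n$ with the reciprocity image of $\prod_{v|p}U_v^{ne_v}$; this can be obtained by applying Proposition \ref{seX0} to $\tilde{\Ko}$ in place of $M$, or by composing the exact sequence of Proposition \ref{seX0} with the projection $\X \surj \gal(\tilde{\Ko}/\Ko)$ and invoking the conductor description of $\tilde{\Ko}\cap\Hr$.
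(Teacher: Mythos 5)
Your argument is correct, but for the key surjection it takes a genuinely different route from the paper. The paper works field-theoretically: since $\gal(\tilde{\Ko}/\tilde{\Ko}\cap\Hr)\cong\Z_p^r$, it chooses $r$ independent degree-$p$ extensions $M_1,\dots,M_r$ of $\tilde{\Ko}\cap\Hr$ inside $\tilde{\Ko}$ and shows each has conductor dividing $p^{n+1}$, hence lies in $\Hr[n+1]$, via Lemma \ref{lemme_cond_loc}; the ramification hypothesis enters through Lemma \ref{lsurj}, which converts it into the conductor bound $c_{n,v}>e_v/(p-1)$ needed there. You instead work on the id\`ele side: identifying $N_n=\gal(\tilde{\Ko}/\tilde{\Ko}\cap\Hr)$ with the reciprocity image of $\prod_{v|p}U_v^{ne_v}$ and applying Lemma \ref{lemmeserre} directly to the unit filtration to get $N_{n+1}=pN_n$, whence $N_n/N_{n+1}\cong(\Z/p\Z)^r$ and $Y_n\surj N_n/N_{n+1}$. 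Both proofs ultimately rest on the same local input (Lemma \ref{lemmeserre}), and your mechanism is exactly the one the paper itself deploys a page later to prove that the $p$-th power map surjects $Y_n$ onto $Y_{n+1}$ via $\mathcal{Q}_n=\gal(M/\Hr)$ --- you have simply pulled it forward. What your version buys: the surjection $Y_n\surj(\Z/p\Z)^r$ holds for every $n\geq 2$ with no ramification hypothesis at all, since $ne_v>e_v/(p-1)$ is then automatic; the ramification condition is only needed for the first assertion, where your inertia argument with $\bigcap_n N_n=1$ is a fleshed-out version of the paper's one-line appeal to the cyclotomic $\Z_p$-extension. What the paper's version buys is that it stays entirely inside the global conductor formalism (Lemmas \ref{lemme_cond_loc} and \ref{lsurj} are stated so as to be reusable) and never needs the identification of $N_n$ with a reciprocity image --- though that identification is legitimate, follows from Proposition \ref{seX0} as you indicate, and is invoked explicitly by the paper itself in the subsequent discussion of $\mathcal{Q}_n$.
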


Before proving the proposition, we need a lemma.
\begin{lemma} \label{lsurj}
If the extension $\KoHr[n]$ is ramified at a place $v$ above $p$, then $c_{n,v} > \frac{e_v}{p-1}$, where $c_{n,v}$ is the conductor of the local extension $(\tilde{\Ko} \cap \Hr[n])_w/\Ko_v $ and $w$ is a place above $v$.
\end{lemma}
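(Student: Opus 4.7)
The plan is to argue by contraposition: I assume $c := c_{n,v} \leq e_v/(p-1)$ and deduce that $L_w/\Ko_v$ is unramified, where $L := \tilde{\Ko} \cap \Hr[n]$ and $w$ is a place of $L$ above $v$. Since $L_0 := \tilde{\Ko} \cap \Hn$ is contained in the Hilbert class field and therefore unramified over $\Ko$, this will contradict the hypothesis that $\KoHr[n]$ is ramified at $v$.

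The key structural input is that $L \subset \tilde{\Ko}$ and $\gal(\tilde{\Ko}/\Ko) \cong \Z_p^r$ is torsion-free. Consequently the local Galois group $\gal(\tilde{\Ko}_w/\Ko_v)$, being a closed subgroup of $\gal(\tilde{\Ko}/\Ko)$, is itself a torsion-free $\Z_p$-module, and so is its inertia subgroup. Under the local reciprocity map, the image of $U_v$ in $\gal(\tilde{\Ko}_w/\Ko_v)$ is therefore torsion-free, which forces all torsion of $U_v$ into its kernel; in particular $\mu_{p^\infty}(\Ko_v) \subset N_{\tilde{\Ko}_w/\Ko_v}(\tilde{\Ko}_w^\times) \subset N_{L_w/\Ko_v}(L_w^\times)$. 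Moreover $\mu_{q-1} \subset N_{L_w/\Ko_v}$ for free, since $\gal(L_w/\Ko_v)$ is a $p$-group and $\mu_{q-1}$ has prime-to-$p$ order.

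The main step then combines these inclusions with the conductor hypothesis $U_v^c \subset N_{L_w/\Ko_v}$, with the assumption $c \leq e_v/(p-1)$, and with Lemma \ref{lemmeserre}. The aim is to show that $U_v$ is generated modulo $N_{L_w/\Ko_v}$ by $U_v^c$, by $\mu_{p^\infty}(\Ko_v)$, and by $\mu_{q-1}$: once this holds, $U_v \subset N_{L_w/\Ko_v}$, so $L_w/\Ko_v$ is unramified, contradicting the hypothesis. The threshold $c \leq e_v/(p-1)$ is precisely what ensures that $\zeta_p \in U_v^{e_v/(p-1)} \subset U_v^c$ (when $\zeta_p \in \Ko_v$), while Lemma \ref{lemmeserre} controls $p$-th powers in $U_v^m$ for $m > e_v/(p-1)$ and lets one relate the filtration quotients $U_v^m/U_v^{m+e_v}$ through $p$-th power relations to elements already known to lie in the norm group.

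The main obstacle is this last generation/factorization step: precisely checking that, in the full generality of $\Ko_v$ and with $c$ possibly anywhere in the range $[1, e_v/(p-1)]$, the three pieces $U_v^c$, $\mu_{p^\infty}(\Ko_v)$ and $\mu_{q-1}$ do cover $U_v$ modulo the norm group. The $\Z_p$-module structure of $U_v^1$ (with torsion part $\mu_{p^\infty}(\Ko_v)$ and a free part of rank $e_v f_v$), together with the fact that the global condition $L \subset \tilde{\Ko}$ excludes the short-conductor ramified characters that a purely local analysis might allow, is what makes this step work; the bookkeeping is the technical heart of the proof.
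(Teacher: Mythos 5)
Your contrapositive, local-class-field-theory approach is genuinely different from the paper's argument, and its first two reductions are sound: $\mu_{q-1}\subset N_{L_w/\Ko_v}(L_w^\times)$ because $\gal(L_w/\Ko_v)$ is a $p$-group, and $\mu_{p^\infty}(\Ko_v)\subset N_{L_w/\Ko_v}(L_w^\times)$ because the image of $U_v$ in $\gal(\tilde{\Ko}_w/\Ko_v)\subset\Z_p^{r}$ is torsion-free. But the decisive step --- that $U_v$ is generated modulo the norm group by $U_v^{c}$, $\mu_{p^\infty}(\Ko_v)$ and $\mu_{q-1}$ --- is exactly what is not proved, and it is false as a purely local statement. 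Already $U_v^{1}/\bigl(U_v^{c}\,\mu_{p^\infty}(\Ko_v)\bigr)$ is nontrivial as soon as $c\geq 2$ (which your hypothesis $c\leq e_v/(p-1)$ permits whenever $e_v\geq 2(p-1)$): the graded piece $U_v^{1}/U_v^{2}$ is isomorphic to $(\Z/p\Z)^{f_v}$, the image of $\mu_{p^\infty}(\Ko_v)$ in it is at most cyclic, and in fact $\zeta_p$ already lies in $U_v^{e_v/(p-1)}\subset U_v^{c}$, so it contributes nothing new there. Everything therefore rests on showing that this leftover part of $U_v^{1}$ still lands in $N_{L_w/\Ko_v}(L_w^\times)$; but that is precisely the assertion that $\tilde{\Ko}_w/\Ko_v$ has no ramified subextension of conductor $\leq e_v/(p-1)$, i.e.\ the lemma itself. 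You flag this as "the technical heart" and attribute it to the global condition $L\subset\tilde{\Ko}$, but the only consequence of that condition you actually extract is the torsion-freeness of $\gal(\tilde{\Ko}/\Ko)$, which you have already spent on $\mu_{p^\infty}(\Ko_v)$. As written, the argument is circular at its key point.

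For comparison, the paper closes this gap without any analysis of the unit filtration: since $M$ contains the cyclotomic $\Z_p$-extension, ramification at $v$ first appears at some minimal level $m$ with $2\leq m\leq n$, so that $\KoHr[m-1]$ is unramified at $v$ while $\KoHr[m]$ is ramified at $v$; minimality forces the local conductor at level $m$ to exceed $(m-1)e_v\geq e_v\geq e_v/(p-1)$, and since the conductor of $\tilde{\Ko}\cap\Hr[m]/\Ko$ divides that of $\tilde{\Ko}\cap\Hr[n]/\Ko$, the bound propagates to all $n\geq m$. If you wish to keep your local route you must supply an actual proof that no character of $\gal(\tilde{\Ko}_w/\Ko_v)$ is ramified with conductor $\leq e_v/(p-1)$; otherwise the paper's monotonicity-of-conductors argument is both shorter and complete.
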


\begin{proof}[Proof of Lemma \ref{lsurj}.]
As $M$ contains the cyclotomic $\Z_p$-extension, there exists an $n_0$ such that $\KoHr[n_0]$ is ramified at all places $v$ above $p$. As $\KoHr[n_0]$ is ramified at $v$ then, for $n \geq n_0$, $\KoHr[n]$ is ramified at $v$, so that there exists an $m$ such that $n \geq m \geq 2 $ and that $\KoHr[m-1]$ is unramified at $v$ and such that $\KoHr[m]$ is ramified at $v$. Then the local conductor $c_{m,v}$ is greater than $(m-1)e_v$, yet $m \geq 2$ so $c_{m,v} > (m-1)e_v \geqslant e_v \geqslant \frac{e_v}{p-1}$. As the conductor of the extension $\tilde{\Ko} \cap \Hr[m] / \Ko$ divides the conductor of $\tilde{\Ko} \cap \Hr[n] / \Ko$, we have $c_{n,v} \geqslant c_{m,v}>\frac{e_v}{p-1}$. 
\end{proof}
\begin{proof}[Proof of the Proposition \ref{surjKCr}.]

We consider the diagram
\begin{equation}
  \label{diag_ker}
  \xymatrix{
\tilde{\Ko} \cap \Hr[n] \ar@{-}[r] &  (\tilde{\Ko} \cap \Hr[n]) \Hr[] \ar@{-}[rr] & &\Hr[n] \\
\tilde{\Ko} \cap \Hr[n-1] \ar@{-}[r] \ar@{-}[u] &  (\tilde{\Ko} \cap \Hr[n-1]) \Hr[] \ar@{-}[r] \ar@{-}[u] &  \Hr[n-1] \ar@{-}[ur] \ar@/_1pc/@{-}[ur]_{Y_{n-1}} \\
\tilde{\Ko} \cap \Hr[] \ar@{-}[r] \ar@{-}[u] & \Hr[]\ar@{-}[u] \ar@{-}[ru] &   \\
\Ko. \ar@{-}[u] &
}
\end{equation}

We have $\gal(\tilde{\Ko}/\Ko) = \Z_p^{r}$.
It is clear that $Y_n \surj \gal(\tilde{\Ko} \cap \Hr[n+1] / \tilde{\Ko} \cap \Hr[n])$. Yet $\gal(\tilde{\Ko}/\tilde{\Ko} \cap \Hr[n])$ is a $\Z_p$-submodule of $\gal(\tilde{\Ko}/\Ko) = \Z_p^{r}$ of finite index, so it is isomorphic to $\Z_p^{r}$. Hence there exist $r$ extensions, say $M_1,M_2,\cdots,M_{r}$ of $\tilde{\Ko} \cap \Hr$, contained in $\tilde{\Ko}$ such that $\gal(M_i/\tilde{\Ko} \cap \Hr) \simeq \Z/p\Z$ and $\gal(M_1\cdots M_{r}/\tilde{\Ko} \cap \Hr) \simeq (\Z/p\Z)^{r}$.
Yet the conductor of the extension $\tilde{\Ko} \cap \Hr /\Ko$ divides $p^n=\prod_{v |p} \id{p}_v^{ne_v}$. Moreover the hypothesis on $\KoHr[n]$ ensures that we can use Lemma \ref{lemme_cond_loc} and consequently the conductor of the extension $M_i/\Ko$ divides $\prod_{v | p} \id{p}_v^{ne_v+e_v} =p^{n+1}$, i.e., $M_i \subset \Hr[n+1]$ for all $i \in \{1,\cdots,r\}$. Hence the map is surjective.
\end{proof}

We deduce immediately the corollary.
\begin{corollary}
Let $n$ be a positive integer such that the extension
$\tilde{\Ko} \cap \Hr[n] / \tilde{\Ko} \cap \Hr[]$ is ramified at all places above $p$, and that the cardinality of $Y_n$ is exactly $p^{r_2+1}$. Then 
$Y_n \simeq (\Z/p\Z)^{r_2+1}$ and $\Ko$ satisfies the Leopoldt's conjecture at $p$. 
\end{corollary}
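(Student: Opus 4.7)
The plan is to combine Proposition \ref{surjKCr} with the general inequality $r \geq r_2+1$ recalled in the introduction. Under the hypothesis that $\tilde{\Ko} \cap \Hr[n] / \tilde{\Ko} \cap \Hr[]$ is ramified at all places above $p$, the argument used in the proof of Proposition \ref{surjKCr} (through Lemmas \ref{lsurj} and \ref{lemme_cond_loc}) applies directly to our particular $n$ and produces a surjection $Y_n \surj (\Z/p\Z)^{r}$, where $r$ denotes the $\Z_p$-rank of $\X$. Taking cardinalities, this surjection gives $|Y_n| \geq p^{r}$.

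The second hypothesis $|Y_n| = p^{r_2+1}$ then forces $p^{r_2+1} \geq p^{r}$, i.e.\ $r \leq r_2+1$. But the exact sequence \eqref{se:Xo} shows unconditionally that $r \geq r_2+1$, as recalled in the introduction. Hence $r = r_2+1$, which is precisely Leopoldt's conjecture for $\Ko$ at $p$. Moreover, once $r = r_2+1$ is known, the surjection $Y_n \surj (\Z/p\Z)^{r_2+1}$ becomes a surjection between finite groups of the same order $p^{r_2+1}$, hence an isomorphism, and we conclude $Y_n \simeq (\Z/p\Z)^{r_2+1}$.

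No genuine obstacle arises: the corollary is essentially a cardinality comparison built on top of Proposition \ref{surjKCr}. The only minor point to verify is that the surjectivity conclusion of that proposition is available under the ramification hypothesis applied to our specific $n$, rather than only for $n$ larger than some unspecified $n_0$; inspection of the proof of Proposition \ref{surjKCr} shows that its key step uses precisely this ramification property, so the conclusion transfers verbatim.
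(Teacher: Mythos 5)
Your argument is correct and is precisely the ``immediate'' deduction the paper intends (the paper gives no written proof, stating only that the corollary follows at once from Proposition \ref{surjKCr}): the surjection $Y_n \twoheadrightarrow (\Z/p\Z)^{r}$ forces $p^{r_2+1}=\#Y_n\geq p^{r}$, which with $r\geq r_2+1$ yields $r=r_2+1$ (Leopoldt) and turns the surjection into an isomorphism. Your remark that the ramification hypothesis at the specific $n$ suffices to run the proposition's proof is also the right observation.
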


From now on, as we can numerically check that $\Ko$ satisfies the Leopoldt's conjecture at $p$, we assume it does so, in
order to compute $\tor$. Note that if Leopoldt's conjecture is false, then $r>r_2+1$ and our algorithm never stops.

\begin{corollary}
 We assume that, for some integer $n$ such that the extension
$\tilde{\Ko} \cap \Hr[n] / \tilde{\Ko} \cap \Hr[]$ is ramified at all places above of $p$, the cardinal of $Y_n$ is exactly $p^{r_2+1}$. Then $Y_n \simeq \gal(\tilde{\Ko} \cap \Hr[n+1]/\tilde{\Ko} \cap \Hr)$.
\end{corollary}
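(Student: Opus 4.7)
The plan is to observe that the heavy lifting has already been done in Proposition~\ref{surjKCr} and the corollary immediately before the statement, and that the present corollary follows by a pure cardinality squeeze on a chain of surjections already present (at least implicitly) in the proof of~\ref{surjKCr}.

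First I would extract from the proof of Proposition~\ref{surjKCr} the explicit chain
\[
Y_n \twoheadrightarrow \gal\bigl(\tilde{\Ko}\cap \Hr[n+1]\,/\,\tilde{\Ko}\cap\Hr[n]\bigr) \twoheadrightarrow \gal\bigl(M_1\cdots M_r\,/\,\tilde{\Ko}\cap\Hr[n]\bigr) \simeq (\Z/p\Z)^{r}.
\]
The first arrow is the natural restriction (already noted in that proof), and the second arrow uses exactly what was shown there: the $r$ independent degree-$p$ extensions $M_1,\dots,M_r$ of $\tilde{\Ko}\cap\Hr[n]$ constructed via Lemma~\ref{lemme_cond_loc} all lie inside $\tilde{\Ko}\cap\Hr[n+1]$, and their compositum has Galois group $(\Z/p\Z)^r$ over $\tilde{\Ko}\cap\Hr[n]$.

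Next I would invoke the preceding corollary: its hypotheses (ramification at all places above $p$ and $|Y_n|=p^{r_2+1}$) guarantee that Leopoldt's conjecture holds for $\Ko$ at $p$, so $r=r_2+1$. Therefore the finite groups at the two ends of the above chain have the same cardinality $p^{r_2+1}$, namely $|Y_n|=|(\Z/p\Z)^r|$. A surjective homomorphism between finite groups of equal order is an isomorphism, hence both arrows in the chain must be isomorphisms; in particular
\[
Y_n \simeq \gal\bigl(\tilde{\Ko}\cap \Hr[n+1]\,/\,\tilde{\Ko}\cap\Hr[n]\bigr),
\]
which is the claim (recall $\Hr$ means $\Hr[n]$).

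No real obstacle is anticipated. The only minor point of care is making the intermediate surjection onto $\gal(\tilde{\Ko}\cap\Hr[n+1]/\tilde{\Ko}\cap\Hr[n])$ fully explicit from the proof of Proposition~\ref{surjKCr}, but this is immediate from the fact that any element of $Y_n$, viewed as an automorphism of $\Hr[n+1]/\Ko$ trivial on $\Hr[n]$, restricts to an automorphism of $\tilde{\Ko}\cap\Hr[n+1]$ trivial on $\tilde{\Ko}\cap\Hr[n]$, and this restriction is surjective by standard Galois theory.
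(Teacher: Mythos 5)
Your proof is correct and is exactly the deduction the paper intends: the corollary is stated without a separate proof because the proof of Proposition \ref{surjKCr} already furnishes the chain of surjections $Y_n \twoheadrightarrow \gal(\tilde{\Ko}\cap\Hr[n+1]/\tilde{\Ko}\cap\Hr) \twoheadrightarrow \gal(M_1\cdots M_r/\tilde{\Ko}\cap\Hr)\simeq(\Z/p\Z)^{r}$, and the preceding corollary gives $r=r_2+1$, so the hypothesis $\#Y_n=p^{r_2+1}$ forces both surjections to be isomorphisms by counting. Your write-up matches this argument step for step, including the correct use of the ramification hypothesis to justify the construction of the $M_i$ via Lemma \ref{lemme_cond_loc}.
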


It remains to check that if $Y_{n_0} \simeq (\Z/p\Z)^{r_2+1}$ for some $n_0$, then  $Y_n \simeq(\Z/p\Z)^{r_2+1}$ for all integers $n \geq n_0$.
For this purpose, we consider the exact sequence defining the $p$-part of the ray class group:
$$
\xymatrix{
1 \ar[r] & \UKn \ar[r] & \prod_{v|p} U_v^{ne_v} \ar[r] & \X \ar[r] & \A \ar[r] & 1,
}
$$
and we denote $\mathcal{Q}_n= \prod_{v|p} U_v^{ne_v}/ \UKn$. We have $\mathcal{Q}_n=\gal(M/\Hr)$ and consequently $\mathcal{Q}_{n}  / \mathcal{Q}_{n+1} = Y_n \simeq \gal(\Hr[n+1]/\Hr) $. 

 \begin{proposition}
  For $n \geq 2 $, raising to the $p^{\text{th}}$ power induces, via the Artin map, a surjection from $Y_n$ to $Y_{n+1}$.
 \end{proposition}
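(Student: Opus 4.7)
The plan is to translate Serre's Lemma \ref{lemmeserre} through the idelic description $\mathcal{Q}_n = \gal(M/\Hr)$ supplied by Proposition \ref{seX0}. The hypothesis $n \geq 2$ is exactly what is needed in order to apply Serre to the local filtration: we want $ne_v > e_v/(p-1)$, i.e.\ $n(p-1) > 1$, which fails for $(p,n) = (2,1)$ but holds for every prime $p$ as soon as $n \geq 2$. Under this hypothesis, $x \mapsto x^p$ is an isomorphism $U_v^{ne_v} \xrightarrow{\sim} U_v^{(n+1)e_v}$ for each $v \mid p$, and taking products yields an isomorphism
\[
\varphi_n \colon \prod_{v \mid p} U_v^{ne_v} \xrightarrow{\sim} \prod_{v \mid p} U_v^{(n+1)e_v}.
\]

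The first concrete step is to check that $\varphi_n(\UKn) \subset \UKn[n+1]$. If $u \in U_\Ko$ satisfies $u \in U_v^{ne_v}$ for every $v \mid p$, the same lemma gives $u^p \in U_v^{(n+1)e_v}$ for every such $v$, and this property is preserved under pro-$p$-completion. Consequently $\varphi_n$ descends to a map $\overline{\varphi}_n \colon \mathcal{Q}_n \to \mathcal{Q}_{n+1}$, which is surjective since $\varphi_n$ already is. Applying the identical reasoning with $n$ replaced by $n+1$ (still in the valid range) shows that $\overline{\varphi}_n$ sends the subgroup $\mathcal{Q}_{n+1} \subset \mathcal{Q}_n$ into $\mathcal{Q}_{n+2} \subset \mathcal{Q}_{n+1}$: indeed, an element of $\mathcal{Q}_{n+1}$ is represented by some $y \in \prod_{v\mid p} U_v^{(n+1)e_v}$, and $y^p \in \prod_{v\mid p} U_v^{(n+2)e_v}$. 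Therefore $\overline{\varphi}_n$ descends further to a well-defined map $Y_n = \mathcal{Q}_n/\mathcal{Q}_{n+1} \to \mathcal{Q}_{n+1}/\mathcal{Q}_{n+2} = Y_{n+1}$, and its surjectivity is inherited from that of $\overline{\varphi}_n$ followed by the quotient $\mathcal{Q}_{n+1} \twoheadrightarrow Y_{n+1}$.

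The ``via the Artin map'' clause is essentially automatic: since Artin reciprocity is a group homomorphism, it intertwines the idelic $p$-th power $\varphi_n$ with the operation $\sigma \mapsto \sigma^p$ on Galois groups. Under the identifications $\mathcal{Q}_n \simeq \gal(M/\Hr)$ and $Y_n \simeq \gal(\Hr[n+1]/\Hr)$, the map constructed above is therefore literally raising Galois elements to the $p$-th power. The argument presents no substantial obstacle; the only subtlety is the bookkeeping for the inclusion $\mathcal{Q}_{n+1} \hookrightarrow \mathcal{Q}_n$ and its compatibility with $\varphi_n$, together with the sharp threshold at $p = 2$, which is exactly what forces the hypothesis $n \geq 2$ rather than $n \geq 1$.
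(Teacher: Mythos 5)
Your proof is correct and follows essentially the same route as the paper: both invoke Lemma \ref{lemmeserre} (with the threshold $n > 1/(p-1)$, which is where $n\geq 2$ enters for $p=2$) to get the $p$-th power isomorphism $\prod_{v\mid p}U_v^{ne_v}\xrightarrow{\sim}\prod_{v\mid p}U_v^{(n+1)e_v}$, descend it to a surjection $\mathcal{Q}_n\twoheadrightarrow\mathcal{Q}_{n+1}$, and then pass to the quotients $Y_n=\mathcal{Q}_n/\mathcal{Q}_{n+1}$. The only cosmetic difference is that the paper packages the last step as a snake-lemma argument on the two short exact sequences, whereas you verify the compatibility of $\varphi_n$ with the inclusions $\mathcal{Q}_{n+1}\subset\mathcal{Q}_n$ by hand.
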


 \begin{proof}
Recall that $\mathcal{Q}_n=\prod_{v |p} U_v^{ne_v} / \UKn=\ker(\X \to \A)$. We have that $n > \frac{1}{p-1}$. Raising to the $p^{th}$ power realizes an isomorphism of $\prod_{v |p} U_v^{ne_v}$ onto $\prod_{v |p} U_v^{ne_v+e_v}$. This isomorphism induces a surjection from $\mathcal{Q}_n$ onto $\mathcal{Q}_{n+1}$. We consider the diagram
$$
\xymatrix{
1 \ar[r] & \mathcal{Q}_{n+1} \ar@{->>}[d]^{(.)^p}\ar[r] & \mathcal{Q}_n \ar@{->>}[d]^{(.)^p} \ar[r] & \ar@{->}[d]^{(.)^p} \mathcal{Q}_n/\mathcal{Q}_{n+1} \ar[r] & 1 \\
1 \ar[r] & \mathcal{Q}_{n+2} \ar[r] & \mathcal{Q}_{n+1} \ar[r] & \mathcal{Q}_{n+1}/\mathcal{Q}_{n+2} \ar[r] & 1. \\
}
$$
We deduce from the snake lemma that the vertical arrow on the right-hand side is a surjection from $\mathcal{Q}_n/\mathcal{Q}_{n+1}$ onto $\mathcal{Q}_{n+1}/\mathcal{Q}_{n+2}$, i.e., from $Y_{n}$ onto $Y_{n+1}$.
 \end{proof}

 \begin{corollary}
We denote $q_n=\#(Y_n)$. For all $n \geqslant 2$, $q_n\geqslant q_{n+1}$. Therefore the sequence $(q_n)_{n\geq1}$ is ultimately constant.
 \end{corollary}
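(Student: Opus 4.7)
The plan is to derive the corollary essentially as a direct consequence of the preceding proposition, plus a standard monotone-sequence argument.

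First, I would invoke the proposition immediately above: for every $n \geq 2$, raising to the $p$-th power induces, through the Artin map, a surjection $Y_n \twoheadrightarrow Y_{n+1}$. Since a surjection between finite groups cannot increase cardinality, this gives at once $q_n = \#(Y_n) \geq \#(Y_{n+1}) = q_{n+1}$ for all $n \geq 2$, which is the first assertion of the corollary.

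Next, I would observe that each $Y_n$ is a finite subquotient of $\X$ (indeed, $Y_n \simeq \mathcal{Q}_n/\mathcal{Q}_{n+1}$), so each $q_n$ is a positive integer --- in fact a power of $p$. The sequence $(q_n)_{n \geq 2}$ is therefore a non-increasing sequence in $\mathbb{N}_{>0}$, and any such sequence is ultimately constant. Combined with the finitely many initial terms $q_1$, this shows that $(q_n)_{n \geq 1}$ itself is ultimately constant, finishing the proof.

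There is no real obstacle here: the whole content has been packaged into the previous proposition. The only mild point worth being explicit about is that the $Y_n$ are finite (so that ``surjection'' really implies the inequality on cardinalities), but this is clear because $Y_n$ is a quotient of $\A[n+1]$, which is the $p$-part of a ray class group and therefore finite. I would state this observation in one sentence and keep the proof extremely short.
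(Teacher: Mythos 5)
Your proof is correct and is exactly the argument the paper intends: the corollary is stated without proof precisely because it follows immediately from the preceding proposition (surjectivity of $Y_n\twoheadrightarrow Y_{n+1}$ for $n\ge 2$) together with the fact that a non-increasing sequence of positive integers stabilizes. One negligible slip: $Y_n=\ker(\A[n+1]\to\A[n])$ is a \emph{subgroup} of the finite group $\A[n+1]$ rather than a quotient of it, but finiteness follows either way.
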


We recall that $Y_n$ is $\ker(\A[n+1] \to \A[n])$.
\begin{theorem} \label{th_stab_ker} As we assume Leopoldt's conjecture, there exists an integer $n_0$ such that $Y_{n_0} \simeq (\Z/p\Z)^{r_2+1}$. Moreover for all integers $n \geq n_0$, the modules $\Qn=\gal(M/\Hr)$ are $\Z_p$-free of rank $r_2+1$ and 
$$Y_n \simeq (\Z/p\Z)^{r_2+1}.$$
\end{theorem}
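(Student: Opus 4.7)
The plan is to realize each $\Qn$ as a finite-index sub-$\Z_p$-module of $\X$, then exploit the identification $\Qn[n+1] = p\,\Qn$ coming from the previous proposition to force the torsion part of $\Qn$ to vanish for $n$ large enough.

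Under Leopoldt's conjecture, Proposition~\ref{seX0} yields the four-term exact sequence
$$
0 \longrightarrow \UKn \longrightarrow \prod_{v|p} U_v^{ne_v} \longrightarrow \X \longrightarrow \A \longrightarrow 1,
$$
so $\Qn = \gal(M/\Hr)$ sits canonically inside $\X$ as a sub-$\Z_p$-module of finite index $\#\A$. Its $\Z_p$-rank is therefore $r_2+1$, and we split it as $\Qn \simeq \Z_p^{r_2+1} \oplus T_n$, where $T_n$ is a finite submodule of $\tor$.

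Next we invoke the previous proposition: raising to the $p$-th power provides a surjection $\Qn \twoheadrightarrow \Qn[n+1]$ which, written additively inside the $\Z_p$-module $\X$, is simply multiplication by $p$. The inclusion $\Qn[n+1] \subset \Qn$ is therefore actually $\Qn[n+1] = p\,\Qn$ for every $n \geq 2$. Passing to torsion gives $T_{n+1} = p\,T_n$: if $x = py$ with $x$ of finite order, then some $p^{k+1}y = 0$, forcing $y \in T_n$. Hence the sequence $(|T_n|)_{n \geq 2}$ of non-negative integers is non-increasing, so it stabilizes from some index $n_0$ onward.

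For this $n_0$, the surjection $T_{n_0} \twoheadrightarrow p\,T_{n_0} = T_{n_0+1}$ is a bijection between finite groups of equal cardinality, so multiplication by $p$ is injective on the finite abelian $p$-group $T_{n_0}$; this forces $T_{n_0} = 0$. The identity $T_n = p^{n-n_0}T_{n_0}$ then propagates the vanishing to all $n \geq n_0$, so $\Qn \simeq \Z_p^{r_2+1}$ is $\Z_p$-free and
$$
Y_n \;=\; \Qn/\Qn[n+1] \;=\; \Qn/p\,\Qn \;\simeq\; (\Z/p\Z)^{r_2+1}.
$$
The step that requires the most care is the clean identification $\Qn[n+1] = p\,\Qn$ inside $\X$, namely that the Artin map intertwines the $p$-th power on local units with multiplication by $p$ on $\X$; granted this, the conclusion rests on the elementary fact that a finite abelian $p$-group on which $p$ acts bijectively must be trivial.
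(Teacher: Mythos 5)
Your proof is correct, but it follows a genuinely different route from the paper's. The paper splits $\X \simeq \Z_p^{r_2+1}\times\tor$, realizes the torsion part as $\gal(M'/\Ko)$ for an auxiliary field $M'$ with $\tilde{\Ko}M'=M$, observes that $M'\subset\Hr[n_1]$ for some $n_1$ (so that $\Qn\subset\gal(M/M')\simeq\Z_p^{r_2+1}$ is free for $n\geq n_1$), and then combines this with two separate inputs: the surjection $Y_n\surj(\Z/p\Z)^{r_2+1}$ from Proposition \ref{surjKCr} (which rests on the conductor Lemma \ref{lemme_cond_loc} and the ramification hypothesis) and the fact that $Y_n$ is killed by $p$. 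You instead work entirely inside $\X$: from the identity $\Qn[n+1]=p\,\Qn$ (which does follow from the preceding proposition, since the reciprocity map converts the $p$-th power on $\prod_{v|p}U_v^{ne_v}$ into multiplication by $p$ on its image in $\X$) you deduce $T_{n+1}=pT_n$ for the torsion submodules, whose orders therefore strictly decrease until they vanish; freeness of $\Qn$ and $Y_n=\Qn/p\,\Qn\simeq(\Z/p\Z)^{r_2+1}$ then drop out simultaneously. Your argument is shorter and bypasses Proposition \ref{surjKCr} and the auxiliary field $M'$ altogether, needing only $n\geq 2$ so that Lemma \ref{lemmeserre} applies; what it gives up is the explicit description of the stabilization index $n_0$ in terms of local conductors and of the field $\tilde{\Ko}\cap\Hr[n_1]$, which the paper reuses in the later section on explicit bounds. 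One small point of rigor worth spelling out: the embedding of $\Qn$ into $\X$ as a finite-index submodule (index $\#\A$) uses the injectivity of $\UKn\to\prod_{v|p}U_v^{ne_v}$, i.e.\ Leopoldt, exactly as you say, and the identification of the torsion of $p\,\Qn$ with $p\,T_n$ is the standard splitting of a finitely generated $\Z_p$-module; both are fine as written.
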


\begin{proof}
The $\Z_p$-module $\X$ is isomorphic to the direct product of its torsion part and of $\Z_p^{r_2+1}$. An isomorphism being chosen, we can identify $\Z_p^{r_2+1}$ with a subgroup of $\X$ and therefore define, via Galois theory, an extension $M'$ of $\Ko$ such that $\gal(M'/\Ko) \simeq \tor$ and $\tilde{\Ko}M'=M$.
 
This extension being unramified outside $p$, there exists an integer $n_1$ such that $M' \subset \Hr[n_1]$ and consequently $\Hr[n_1] \tilde{\Ko} =M$. Moreover, for all integer $n \geq n_1$, $\gal(M/\Hr)$ is a submodule of finite index of $\gal(M/M')=\Z_p^{r_2+1}$, and consequently $\Qn=\gal(M/\Hr) \simeq \Z_p^{r_2+1}$. The $\Z_p$-module $\Qn$ is therefore free of rank $r_2+1$.

About the other kernel $Y_n$ we saw that there exists an integer $n_2$ such that $Y_n$ maps surjectively onto $(\Z/p\Z)^{r_2+1}$ for all integer $n \geq n_2$ (we can choose $n_2$ to be the minimum of all integers $n$ such that for all $p$-places $v$ the conductors of $(\tilde{\Ko} \cap \Hr)_w/\Ko_v$ are at least $ \frac{e}{p-1}$). Then we note that mapping $x \in U_v^{ne_v}$ to $x \in U_v^{ne_v+e_v}$ realizes an isomorphism between $U_v^{ne_v}$ and $U_v^{ne_v+e_v}$, so that the quotient $\mathcal{Q}_{n}/\mathcal{Q}_{n+1}$, which is isomorphic to $Y_n$, is killed by $p$. Define $n_0=\text{Max}(n_1,n_2)$ and let $n \geqslant n_0$ be an integer. The kernel $Y_n$ is therefore a quotient of $\Z_p^{r_2+1}$, which maps surjectively onto $(\Z/p\Z)^{r_2+1}$ and is killed by $p$. Hence we get $Y_n \simeq (\Z/p\Z)^{r_2+1}$.
\end{proof}

\subsection{Computing the invariants of $\tor$}
We start by recalling the definition of the invariant factors of an abelian group $G$.
\begin{definition}
Let $G$ be a finite abelian group, there exists a unique sequence $a_1,\cdots,a_t$ such that $a_{i} | a_{i+1}$ for $i \in \{1,\cdots,t-1\}$ and $G \simeq \prod_{i=1}^t \Z/a_i\Z$.\\ 
These $a_i$ are the invariant factors of the group $G$. 
\end{definition}
In what follows we will denote these invariants by $\mathcal{FI}(G)=[a_1,\cdots,a_t]$. If $G$ is a $p$-group, these invariant factors are all powers of $p$.
In practice, we are able to determine the invariant factors of $\A$. We will see in this section that the knowledge of invariant factors of $\A$, for $n$ large enough, combined with the stabilizing properties of $\A$, does determine explicitly the invariants
factors of $\tor$, and thus $\tor$ itself. We recall that for $n$ large enough, $\A$ is isomorphic to the direct product of $\gal(\tilde{\Ko} \cap \Hr/\Ko)$ and of $\gal(\Hr/\tilde{\Ko} \cap \Hr)=\tor$. So we will first explore the structure of $\gal(\tilde{\Ko} \cap \Hr/\Ko)$.

\begin{proposition}
  Let $n_0$ be such that $\KoHr[n_0]$ is ramified at all places above of $p$ and 
$$Y_{n_0} \simeq (\Z/p\Z)^{r_2+1}.$$
Then for all integer $n \geq n_0$, we have
 $$\gal(\tilde{\Ko}/ \tilde{\Ko} \cap \Hr[n+1])=p\gal(\tilde{\Ko}/\tilde{\Ko} \cap \Hr[n]).$$

\end{proposition}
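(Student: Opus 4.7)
The plan is to identify both sides as subgroups of the free $\Z_p$-module $\gal(\tilde{\Ko}/\Ko)\cong\Z_p^{r_2+1}$ (Leopoldt being assumed) and show they have the same index while one contains the other. Write $A_n:=\gal(\tilde{\Ko}/\tilde{\Ko}\cap\Hr[n])$. Since $\tilde{\Ko}\cap\Hr[n]/\Ko$ is a finite extension, $A_n$ is a subgroup of finite index of $\Z_p^{r_2+1}$, hence itself free of rank $r_2+1$. The claim $A_{n+1}=pA_n$ will follow from the two inclusions $pA_n\subseteq A_{n+1}$ and $[A_n:A_{n+1}]=[A_n:pA_n]=p^{r_2+1}$.

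First I would establish the containment $pA_n\subseteq A_{n+1}$. By Galois theory the quotient $A_n/A_{n+1}$ is isomorphic to $\gal\bigl((\tilde{\Ko}\cap\Hr[n+1])/(\tilde{\Ko}\cap\Hr[n])\bigr)$. Restriction to $\tilde{\Ko}\cap\Hr[n+1]$ yields a surjection $Y_n=\gal(\Hr[n+1]/\Hr[n])\twoheadrightarrow\gal\bigl((\tilde{\Ko}\cap\Hr[n+1])/(\tilde{\Ko}\cap\Hr[n])\bigr)$, exactly as used in the proof of Proposition \ref{surjKCr}. Since $n\geq n_0$, Theorem \ref{th_stab_ker} (or the hypothesis of the present proposition, combined with the stabilization corollary) ensures $Y_n\simeq(\Z/p\Z)^{r_2+1}$, so the quotient $A_n/A_{n+1}$ is killed by $p$, giving $pA_n\subseteq A_{n+1}$.

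Next I would compare the indices. On the one hand, since $A_n\simeq\Z_p^{r_2+1}$ is free, $[A_n:pA_n]=p^{r_2+1}$. On the other hand, Proposition \ref{surjKCr} exhibits inside $Y_n$ an $(\Z/p\Z)^{r_2+1}$ coming precisely from extensions $M_1,\dots,M_{r_2+1}\subset\tilde{\Ko}\cap\Hr[n+1]$, linearly disjoint over $\tilde{\Ko}\cap\Hr[n]$; hence $\gal\bigl((\tilde{\Ko}\cap\Hr[n+1])/(\tilde{\Ko}\cap\Hr[n])\bigr)$ contains a copy of $(\Z/p\Z)^{r_2+1}$, while it is simultaneously a quotient of $Y_n\simeq(\Z/p\Z)^{r_2+1}$. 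Both containments being forced to equalities for cardinality reasons, $A_n/A_{n+1}\simeq(\Z/p\Z)^{r_2+1}$ and $[A_n:A_{n+1}]=p^{r_2+1}$.

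Combining the two steps, $pA_n\subseteq A_{n+1}$ with equal (finite) index in $A_n$, so $pA_n=A_{n+1}$, which is the claimed equality. The only real point of attention is the surjection $Y_n\twoheadrightarrow\gal((\tilde{\Ko}\cap\Hr[n+1])/(\tilde{\Ko}\cap\Hr[n]))$ being an isomorphism: this is where the hypothesis $n\geq n_0$ together with Leopoldt's conjecture is essential, as it forces the intermediate object sandwiched between $(\Z/p\Z)^{r_2+1}$ (from Proposition \ref{surjKCr}) and $Y_n\simeq(\Z/p\Z)^{r_2+1}$ (from Theorem \ref{th_stab_ker}) to coincide with both.
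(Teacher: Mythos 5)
Your proof is correct, and it reaches the conclusion by a route that differs in a meaningful way from the paper's, even though both rest on the same elementary fact: if a free $\Z_p$-module of rank $r_2+1$ has a submodule whose quotient is killed by $p$ and has order $p^{r_2+1}$, then that submodule is $p$ times the module. The paper applies this fact to $\Qn=\gal(M/\Hr)$: Theorem \ref{th_stab_ker} gives that $\Qn$ is free of rank $r_2+1$ and that $Y_n=\Qn/\Qn[n+1]\simeq(\Z/p\Z)^{r_2+1}$, whence $\Qn[n+1]=p\Qn$; it then transfers this statement to $\gal(\tilde{\Ko}/\tilde{\Ko}\cap\Hr)$ by noting that the hypotheses force $\tor\subset\A[n_0]$, hence $\tilde{\Ko}\,\Hr[n_0]=M$, so that restriction identifies $\gal(M/\Hr)$ with $\gal(\tilde{\Ko}/\tilde{\Ko}\cap\Hr)$. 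You instead work directly with $A_n=\gal(\tilde{\Ko}/\tilde{\Ko}\cap\Hr)$, whose freeness is immediate (finite index in $\gal(\tilde{\Ko}/\Ko)\simeq\Z_p^{r_2+1}$), and you pin down $A_n/A_{n+1}$ by sandwiching: it is a quotient of $Y_n\simeq(\Z/p\Z)^{r_2+1}$ (upper bound $p^{r_2+1}$ on its order, and killed by $p$), while the compositum $M_1\cdots M_{r_2+1}\subseteq\tilde{\Ko}\cap\Hr[n+1]$ constructed in the proof of Proposition \ref{surjKCr} forces the order to be at least $p^{r_2+1}$. This makes your argument independent of the identity $\tilde{\Ko}\,\Hr[n_0]=M$, at the cost of reusing the internal construction of Proposition \ref{surjKCr} rather than only its statement; the paper's identification $\gal(M/\Hr)\simeq\gal(\tilde{\Ko}/\tilde{\Ko}\cap\Hr)$ is, on the other hand, exactly what the subsequent corollaries exploit, so the paper gets that by-product for free. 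One cosmetic remark: the compositum $M_1\cdots M_{r_2+1}$ realizes $(\Z/p\Z)^{r_2+1}$ as a quotient of $\gal\bigl((\tilde{\Ko}\cap\Hr[n+1])/(\tilde{\Ko}\cap\Hr)\bigr)$ rather than, as you write, a subgroup; but since only the resulting lower bound on the cardinality enters your index count, this does not affect the validity of the argument.
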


\begin{proof}
By Theorem \ref{th_stab_ker}, on the one hand, $\Qn$ is $\Z_p$-free of rank $r_2+1$ and on the other hand $Y_n =\Qn/\Qn[n+1]\simeq (\Z/p\Z)^{r_2+1}$. This gives $\Qn[n+1]=p\Qn$. As $\KoHr[n_0]$ is ramified at all places above $p$ and 
$Y_{n_0} \simeq (\Z/p\Z)^{r_2+1}$, we have $\tor \subset \A[n_0]$, so $\tilde{\Ko} \Hr[n_0]=M $. Then, considering the diagram
\def\trait{\ar@{-}}
$$
\xymatrix{
\tilde{\Ko} \trait[d] \trait[r] & M \trait[d] \\
\tilde{\Ko} \cap \Hr[n+1] \trait[r] \trait[d] & \Hr[n+1] \trait[d]  \ar@/^1pc/@{-}[u]^{\Qn[n+1]} \\
\tilde{\Ko} \cap \Hr \trait[r] \trait[d] & \Hr  \ar@/_2pc/@{-}[uu]_{\Qn} \\
\Ko
}
$$
we get the required isomorphism.
\end{proof}

\begin{corollary}
  Let $n_0$ be an integer such that $\KoHr[n_0]$ is ramified at all places above $p$ and such that $Y_{n_0} \simeq (\Z/p\Z)^{r_2+1}$. Then for all integers $n \geq n_0$ the invariant factors of $\gal( \tilde{\Ko} \cap \Hr[n+1]/ \Ko)$ are obtained by multiplying by $p$ each invariant factor of $\gal( \tilde{\Ko} \cap \Hr/ \Ko)$.
\end{corollary}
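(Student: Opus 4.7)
The strategy is to combine the preceding proposition with the elementary divisor theorem over $\Z_p$. Let me write $A := \gal(\tilde{\Ko}/\Ko)$ and $B_n := \gal(\tilde{\Ko}/\tilde{\Ko}\cap\Hr)$, so that $\gal(\tilde{\Ko}\cap\Hr/\Ko) \simeq A/B_n$. Under Leopoldt's conjecture (our standing assumption) $A \simeq \Z_p^{r_2+1}$, and since $\tilde{\Ko}\cap\Hr/\Ko$ is a finite extension, $B_n$ has finite index in $A$, hence is itself a free $\Z_p$-module of rank $r_2+1$. The preceding proposition supplies the crucial equality of $\Z_p$-submodules $B_{n+1} = p\,B_n$ for every $n \geq n_0$.

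Next I would invoke the structure theorem for finitely generated modules over the principal ideal domain $\Z_p$ applied to the inclusion $B_n \subset A$: it produces a $\Z_p$-basis $e_1,\ldots,e_{r_2+1}$ of $A$ and integers $0 \leq \alpha_1 \leq \cdots \leq \alpha_{r_2+1}$ such that $B_n = \bigoplus_{i=1}^{r_2+1} p^{\alpha_i}\Z_p\, e_i$. This yields
$$A/B_n \simeq \bigoplus_{i=1}^{r_2+1} \Z/p^{\alpha_i}\Z,$$
whose invariant factors are the $p^{\alpha_i}$ with $\alpha_i > 0$. The identity $B_{n+1} = p\,B_n$ reads, in this same basis, $B_{n+1} = \bigoplus_i p^{\alpha_i+1}\Z_p\, e_i$, so that
$$A/B_{n+1} \simeq \bigoplus_{i=1}^{r_2+1} \Z/p^{\alpha_i+1}\Z,$$
with invariant factors $p^{\alpha_i+1} = p \cdot p^{\alpha_i}$. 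This is the desired conclusion.

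I do not foresee any serious obstacle. The only mild point of care is the bookkeeping of trivial factors: an index with $\alpha_i = 0$ contributes nothing to the list of invariant factors of $A/B_n$ under the usual convention, whereas after multiplication by $p$ it contributes a fresh factor $\Z/p\Z$ to $A/B_{n+1}$. Read with the convention that one always lists the full sequence of $r_2+1$ elementary divisors (possibly equal to $1$), the multiplication-by-$p$ statement holds literally; alternatively, one observes that for $n \geq n_0+1$ one has $B_n = p B_{n-1} \subseteq pA$, so all $\alpha_i \geq 1$ and the issue disappears.
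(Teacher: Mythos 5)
Your argument is correct and is exactly the deduction the paper intends (the corollary is stated without proof as an immediate consequence of the preceding proposition): combine the equality $\gal(\tilde{\Ko}/\tilde{\Ko}\cap\Hr[n+1])=p\,\gal(\tilde{\Ko}/\tilde{\Ko}\cap\Hr[n])$ with the elementary divisor theorem for the pair $B_n\subset A\simeq\Z_p^{r_2+1}$. Your remark about possible trivial factors $p^{\alpha_i}=1$ is a worthwhile precision that the paper glosses over, and your resolution (list all $r_2+1$ elementary divisors, or pass to $n\geq n_0+1$) is sound.
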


From the fact that $\X \simeq \Z_p^{r_2+1} \times \tor$, the ray class group, $\gal(\Hr/\Ko)$, is isomorphic to the direct product of $\gal(\tilde{\Ko} \cap \Hr/\Ko)$ and $\gal(\Hr/\tilde{\Ko} \cap \Hr)$. The invariant factors of $\gal(\Hr/\Ko)$ are then simply obtained by concatenating the two groups forming the direct product.
We now state the result that explicitly determines $\tor$.

\begin{theorem}
  Let $n$ such that $Y_n=(\Z/p\Z)^{r_2+1}$ and $\KoHr[n]$ is ramified at all places above $p$. We assume that 
$$
\mathcal{FI}(\A[n])=[b_1,\cdots,b_{t},a_1,\cdots,a_{r_2+1}]
$$
 with $(v_p(a_{1}))>(v_p(b_t))+1$, and that
$$
\mathcal{FI}(\A[n+1])=[b_1,\cdots,b_{t},pa_1,\cdots,pa_{r_2+1}].
$$
 Then we have
 $$\mathcal{FI}(\tor)=[b_1,\cdots,b_t].$$
\end{theorem}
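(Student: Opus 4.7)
The plan is to exploit the direct-product splitting $\A[n] \simeq \tor \times G_n$, where $G_n := \gal(\tilde{\Ko} \cap \Hr/\Ko)$, which is available for our $n \ge n_0$ by Theorem \ref{th_stab_ker} and the preceding corollary. That corollary says that the invariant factors of $G_n$ pass to those of $G_{n+1}$ by multiplication by $p$; combined with $|Y_n| = p^{r_2+1}$, this forces $G_n$ to have exactly $r_2+1$ invariant factors. I would then write $G_n = \prod_{i=1}^{r_2+1} \Z/p^{c_i}\Z$ and $\tor = \prod_{j=1}^{k} \Z/p^{d_j}\Z$ with the $c_i$ and $d_j$ non-decreasing. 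Since the invariant factor decomposition of a direct product of $p$-groups is obtained by sorting the cyclic factors by exponent, $\mathcal{FI}(\A[n])$ is the sorted merge of the multisets $\{p^{c_i}\}$ and $\{p^{d_j}\}$, while $\mathcal{FI}(\A[n+1])$ is the sorted merge of $\{p^{c_i+1}\}$ and the same $\{p^{d_j}\}$. The question then reduces to a purely combinatorial one: show that all $d_j \le v_p(b_t)$ and all $c_i \ge v_p(a_1)$, so that the $b$'s and the $a$'s come cleanly from $\tor$ and $G_n$ respectively.

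Setting $v := v_p(b_t)$, the first key step will be to show that no $c_i$ satisfies $c_i \le v$. Since the $b$'s are identical in $\A[n]$ and $\A[n+1]$, the restrictions of both multisets to valuations $\le v$ agree; after cancelling the common contribution of $\{d_j \le v\}$, this gives $\{c_i : c_i \le v\} = \{c_i + 1 : c_i \le v\}$ as multisets --- an equality $S = S + 1$ of non-negative integers, which is impossible unless $S$ is empty (consider the minimum). So every $c_i > v$. The gap hypothesis $v_p(a_1) > v_p(b_t) + 1$ ensures that $\A[n+1]$ has no invariant factor at valuations $v+1$ or $v+2$, which, counted in terms of the $c$'s and $d$'s, rules out $c_i \in \{v, v+1\}$ and $d_j \in \{v+1, v+2\}$; combined with $c_i > v$, this upgrades to $c_i \ge v+2$ for every $i$.

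The second, symmetric, step would compare the high parts. The $pa$'s equal both the multiset $p \cdot \{a_i\}$ and the sorted merge of $\{p^{c_i+1}\}$ with $\{p^{d_j} : d_j \ge v+3\}$; cancelling the $c$-contribution (which appears symmetrically on both sides) yields $\{d_j : d_j \ge v+3\} = \{d_j + 1 : d_j \ge v+3\}$, hence $\{d_j : d_j \ge v+3\} = \emptyset$ by the same minimality argument. Together with the previously established absence of $d_j$ at valuations $v+1, v+2$, this gives $d_j \le v$ for all $j$. The two steps cleanly separate $\tor$ from $G_n$ in the merged list, yielding $t = k$ and $\mathcal{FI}(\tor) = [b_1, \ldots, b_t]$. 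The main delicate point is the bookkeeping at the critical valuations $v+1$ and $v+2$ --- the gap hypothesis $v_p(a_1) > v_p(b_t) + 1$ is exactly strong enough to keep these valuations vacant on both sides after the shift, which is what allows the clean separation; the algebraic core (the self-identity $S = S + 1 \Rightarrow S = \emptyset$) is elementary once the setup is in place.
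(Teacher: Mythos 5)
Your proof is correct and takes essentially the same route as the paper's: both rest on the splitting $\A[n]\simeq \tor\times\gal(\tilde{\Ko}\cap\Hr/\Ko)$ together with the corollary that the invariant factors of $\gal(\tilde{\Ko}\cap\Hr[n+1]/\Ko)$ are exactly $p$ times those of $\gal(\tilde{\Ko}\cap\Hr/\Ko)$, and both use the gap hypothesis $v_p(a_1)>v_p(b_t)+1$ to separate the two factors inside the merged list of invariant factors. Your multiset bookkeeping (the minimality argument $S=S+1\Rightarrow S=\emptyset$ at the low and high ends) is in fact slightly more careful than the paper's one-line identification, which passes over the possible numerical coincidence $pb_j=b_k$; otherwise the two arguments coincide.
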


\begin{proof}
  Indeed, as $Y_n \simeq (\Z/p\Z)^{r_2+1},$ we have $\A[i]\simeq \tor \times \gal(\tilde{\Ko} \cap \Hr[i]/\Ko)$ for $i \in \{n,n+1\}$. We saw that the invariant factors of $\gal(\tilde{\Ko} \cap \Hr[n+1]/\Ko)$ are exactly equal to $p$ times those of $\gal(\tilde{\Ko} \cap \Hr[n]/\Ko)$. Consequently, if $a$ is an invariant factor of $\gal(\tilde{\Ko} \cap \Hr[n+1]/\Ko)$, we have necessarily that $a=p a_i$ or $a=pb_i$. But as $\mathrm{Min}(v_p(a_i))>\mathrm{Max}(v_p(b_i))+1$, none of the invariants factors of $\gal(\tilde{\Ko} \cap \Hr[n+1]/\Ko)$ is of the form $pb_i$. The invariant factors of $\gal(\tilde{\Ko} \cap \Hr[n+1]/\Ko)$ are therefore exactly $pa_1,\cdots,pa_{r_2+1}$. The result follows from the fact that $\A[n+1]$ is isomorphic to the direct product of $\tor$ and $\gal(\tilde{\Ko} \cap \Hr[n+1]/\Ko)$.
\end{proof}

\section{Explicit computation of bounds}
More generally, if we denote by $e=\max_{v|p}\left\lbrace e_v\right\rbrace $ the ramification index of $\Ko/\mathbb{Q}$ and by $s$ the $p$-adic valuation of $e$, then we can start to check whether $\A$ stabilizes from rank $n=2+s$. To show that $n=2+s$ is the proper starting point we consider the diagram

$$\xymatrix{
&\tilde{\Ko} \cap \Hr[s+2]\ar@{-}[rrr]&  &  &\ar@{-}[ld]{\Hr[s+2]} \\
K_{s+1} \ar@{-}[ur] \ar@{-}[rdd]& & &\ar@{-}[ld]{\Hr[s+1]}\\
&\tilde{\Ko} \cap \Hr[] \ar@{-}[r] \ar@{-}[uu] \ar@{-}[d] & \Hr[]& \\
&\Ko
}$$
where $K_j$ is the $j^{\text{th}}$field of the $\mathbb{Z}_p$-extension of $\Ko$.\\  

We prove below that the places above $p$ are totally ramified in $K_{s+1}/K_{s}$. Therefore $\KoHr[s]$ is ramified at all places above $p$ and we start the computation by checking whether $\A$ stabilizes from $n=s+2$, and until it stabilizes.
We first prove that all places above $p$ are totally ramified in $K_{s+1}/K_s$.\\
Considering the diagram
$$\xymatrix{
& & K_{s+1} \ar@{-}[ld] \ar@{-}[rd]&\\
&K_s& &\mathbb{Q}_{s+1} \ar@{-}[ld]\\
\Ko \ar@{-}[ru]  &  &\mathbb{Q}_s \ar@{-}[ul]  \\
& \mathbb{Q} \ar@{-}[ul] \ar@{-}[ur]
}$$
 
The ramification index of $p$ in $\mathbb{Q}_{s+1} / \mathbb{Q}$ is $p^{s+1}$, while the one in $\Ko/\mathbb{Q}$ is $p^sa$ with $p\nmid a $. Therefore the extension $K_{s+1}/\Ko$ is ramified and $K_{s+1}/K_s$ is totally ramified at all places above $p$.

\begin{corollary} Let $e$ be the ramification index of $p$ in $\Ko/\mathbb{Q}$ and $s$ be the $p$-adic valuation of $e$. Let $n \geq 2+s$, we assume that
$$
\mathcal{FI}(\A)=[b_1,\cdots,b_{t},a_1,\cdots,a_{r_2+1}],
$$
 with $\mathrm{Min}(v_p(a_i))>\mathrm{Max}(v_p(b_i))+1$, and moreover that
$$
\mathcal{FI}(\A[n+1])=[b_1,\cdots,b_{t},pa_1,\cdots,pa_{r_2+1}].
$$
 Then we have
 $$\mathcal{FI}(\tor)=[b_1,\cdots,b_t].$$

\end{corollary}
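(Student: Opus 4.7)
The plan is to view this corollary as an effective version of the preceding theorem: the abstract hypothesis ``$\KoHr[n]$ is ramified at all places above $p$'' is to be replaced by the explicit numerical bound $n\geq 2+s$, while the condition $Y_n\simeq(\Z/p\Z)^{r_2+1}$ is to be extracted from the invariant-factor comparison. I therefore intend to verify both of these hidden hypotheses of the theorem and then invoke it verbatim.

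For the ramification condition, I will lean on the discussion immediately preceding the statement: every $p$-adic place is totally ramified in $K_{s+1}/K_s$, so $K_{s+1}/\Ko$ is ramified at each place above $p$. Since $\tilde{\Ko}$ contains the cyclotomic $\Z_p$-extension and $K_{s+1}\subset\Hr[s+2]$, one has $K_{s+1}\subset\tilde{\Ko}\cap\Hr[n]$ for every $n\geq s+2$, while $\tilde{\Ko}\cap\Hn$ is unramified over $\Ko$. It follows that $\KoHr[n]$ is ramified at every $p$-adic place as soon as $n\geq 2+s$.

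For the structure of $Y_n$, I will first compute $|Y_n|=|\A[n+1]|/|\A[n]|=p^{r_2+1}$ directly from the two lists of invariant factors. Proposition \ref{surjKCr}, now applicable because the ramification condition has just been verified, provides a surjection $Y_n\twoheadrightarrow(\Z/p\Z)^{r}$, where $r$ is the $\Z_p$-rank of $\X$. The lower bound $r\geq r_2+1$ combined with $|Y_n|=p^{r_2+1}$ forces $r=r_2+1$ (thereby confirming Leopoldt's conjecture at $p$ for $\Ko$ \emph{a posteriori}) and promotes the surjection to an isomorphism $Y_n\simeq(\Z/p\Z)^{r_2+1}$.

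With both hypotheses of the previous theorem now in place at the given $n$, I conclude $\mathcal{FI}(\tor)=[b_1,\ldots,b_t]$ by direct invocation. The main obstacle I anticipate is the second step: recognizing that the purely combinatorial splitting of invariant factors into a ``fixed'' part $b_1,\ldots,b_t$ and a ``multiplied-by-$p$'' part $a_1,\ldots,a_{r_2+1}$ automatically supplies the $p$-elementary structure of $Y_n$, through the tension between a cardinality count and the surjection of Proposition \ref{surjKCr}. Everything else reduces to bookkeeping: translating the effective bound $n\geq 2+s$ into the abstract existence of $n_0$ required by the preceding theorem.
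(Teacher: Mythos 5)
Your proposal is correct and follows essentially the same route the paper intends: the discussion preceding the corollary supplies the ramification of $\KoHr[n]$ for $n\geq 2+s$ via $K_{s+1}\subset\tilde{\Ko}\cap\Hr[s+2]$, the cardinality count $\#Y_n=p^{r_2+1}$ combined with Proposition \ref{surjKCr} is precisely the paper's first corollary after that proposition (giving $Y_n\simeq(\Z/p\Z)^{r_2+1}$ and Leopoldt), and the conclusion is then the preceding theorem applied verbatim. No gaps.
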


All the computations have been done using the \pari system \cite{pari}.
\begin{example}
We consider the field $K=\Q(\sqrt{-129})$ and $p=3$. We have: $\mathcal{FI}(\A[2])=[3,3,9]$, $\mathcal{FI}(\A[3])=[3,9,27]$ and $\mathcal{FI}(\A[4])=[3,27,81]$. We deduce that $\tor=(\Z/3\Z)$.
\end{example}
\section{Numerical results}
In the section, we present some of our numerical results and give an explanation of these computations. 
\subsection{Heuristic approach}
We first recall some results on Cohen-Lenstra Heuristics. The main reference on the subject is the seminal paper of Cohen-Lenstra \cite{HCL}. See also \cite{TCD}. These heuristics leads us to compare the proportion of fields with non-trivial $\tor$ with the proportion of groups with non-trivial $p$-part inside all finite abelian groups. If we assume that the extension $\Ko/\Q$ is Galois with $\Delta=\gal(K/\Q)$, then the module $\tor$ is a $\Z[\Delta]$-module. In this section, we assume that $\Delta$ is cyclic of cardinality $l$, for some prime number $l$. Then, as the $p$-part of the class group, $\tor$ itself is a finite $O_l$-module, where $O_l$ is the ring of integers of $\mathbb{\Q}(\zeta_l)$. This module $\tor$ is known in Iwasawa theory as the proper $p$-adic analogue of the class group. Hence it is a natural question to compute it, to examine the distribution of fields with non-trivial $\tor$, and to compare this distribution with the Cohen-Lenstra heuristics about the distribution of groups with non-trivial $p$-part inside all finite abelian groups.
 
In what follows, $O_F$ will be the ring of integers of a number field and $G$ will be a finite $O_F$-module.
In general, we know that all $O_F$-modules $G$ can be written in a non-canonical way as $\oplus_{i=1}^q O_F/\id{a}_i$, where the $\id{a}_i$ are ideals of $O_F$. Yet the Fitting ideal $\id{a}=\prod_{i=1}^q \id{a}_i$ depends only on the isomorphism class of $G$, considered as a $O_F$-module. This invariant, denoted by $\id{a}(G)$, can be considered as a generalization of the order of $G$. We also have $N(\id{a}(G))=\# G$.\\

We consider a function $g$, defined on the set of the isomorphism classes of $O_F$-modules (typically $g$ is a characteristic function). We follow \cite{HCL} for the next definition, using same notations.
\begin{definition}
 The average of $g$, if it exists, is the limit when $N \to \infty$ of the quotient 
$$
\frac{\displaystyle{\sum_{G,N(\id{a}(G)) \leq N} \frac{g(G)}{\#\aut_{O_F}(G)}}}{\displaystyle{\sum_{G,N(\id{a}(G)) \leq N} \frac{1}{\#\aut_{O_F}(G)}}}.
$$
where $\displaystyle{\sum_{G, N(\id{a}(G)) \leq N}}$ is the sum is over all isomorphism classes of $O_F$-modules $G$.
This average is denoted by $M_{l,0}(g)$.
\end{definition}

We denote by $\wl(\id{a})= \displaystyle{\sum_{G,\id{a}(G)=\id{a}} \frac{1}{\#\aut_{O_F}(G)}}$, where $\id{a}$ is an ideal of $O_F$ (using same notation as \cite{HCL}).\\
\begin{proposition}[Corollary 3.8 p.40 \cite{HCL}]\label{propw}
  Let $n \in \mathbb{N}$. Then
$$
\wl(\id{a})= \frac{1}{N(\id{a})}\left( \prod_{\id{p}^\alpha || \id{a}} \prod_{k=1}^\alpha (1-\frac{1}{N_{O_\Ko}(\id{p})^k}) \right)^{-1}.
$$
The notation $\id{p}^\alpha || \id{a}$ means that $\id{p}^\alpha | \id{a}$ and that $\id{p}^{\alpha +1} \nmid \id{a}$. Consequently the function $\wl$, defined on the set of ideals of $O_F$, is multiplicative.
\end{proposition}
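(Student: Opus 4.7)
The plan is to reduce the computation to prime powers of $\id{p}$ by first establishing multiplicativity of $\wl$, and then to evaluate $\wl(\id{p}^\alpha)$ using the structure theorem for finite modules over the discrete valuation ring $O_{F,\id{p}}$. The explicit formula claimed in the proposition has the form $1/N(\id{a})$ times an Euler-type product indexed by the primes dividing $\id{a}$, which already suggests the two-step strategy.

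First I would prove multiplicativity. Given an $O_F$-module $G$ with Fitting ideal $\id{a}(G) = \id{a} = \prod_i \id{p}_i^{\alpha_i}$, the support of $G$ is contained in $\{\id{p}_i\}$, so $G$ decomposes canonically as the direct sum $G = \bigoplus_i G_{\id{p}_i}$ of its $\id{p}_i$-primary parts. Because the $G_{\id{p}_i}$ are supported at distinct primes, $\aut_{O_F}(G) = \prod_i \aut_{O_F}(G_{\id{p}_i})$, and likewise $\id{a}(G) = \prod_i \id{a}(G_{\id{p}_i})$ with $\id{a}(G_{\id{p}_i}) = \id{p}_i^{\alpha_i}$. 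Summing over isomorphism classes, this decomposition gives the bijection needed to factor $\wl(\id{a}) = \prod_i \wl(\id{p}_i^{\alpha_i})$, so it suffices to compute $\wl(\id{p}^\alpha)$.

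Next, for a fixed prime $\id{p}$ of residue norm $q = N(\id{p})$, the $O_F$-modules $G$ with $\id{a}(G) = \id{p}^\alpha$ are, after passing to the completion, exactly the finite length $O_{F,\id{p}}$-modules of length $\alpha$. These are classified by partitions $\lambda = (\lambda_1 \geq \lambda_2 \geq \cdots \geq \lambda_t)$ of $\alpha$, with $G_\lambda \simeq \bigoplus_j O_{F,\id{p}}/\id{p}^{\lambda_j}$. I would then invoke (or derive) the standard formula for $\#\aut_{O_F}(G_\lambda)$ in terms of $q$ and the conjugate partition $\lambda'$, namely $\#\aut_{O_F}(G_\lambda) = q^{\sum_k (\lambda'_k)^2} \prod_k \prod_{j=1}^{m_k} (1 - q^{-j})$ where $m_k$ is the multiplicity of $k$ in $\lambda$. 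The claim then reduces to the combinatorial identity
$$\sum_{\lambda \vdash \alpha} \frac{1}{\#\aut_{O_F}(G_\lambda)} = \frac{1}{q^\alpha}\prod_{k=1}^\alpha \frac{1}{1-q^{-k}}.$$

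The main obstacle is establishing this last identity, since the term-by-term structure on the left is genuinely complicated. I would handle it by the generating function approach: package the left-hand side over all $\alpha \geq 0$ into a single series in a variable $T$, and recognize it as an infinite product via Hall's identity for the zeta function of the category of finite $O_{F,\id{p}}$-modules, i.e.\ $\sum_\alpha T^\alpha \sum_{\lambda \vdash \alpha} \#\aut(G_\lambda)^{-1} = \prod_{k \geq 1}(1 - T q^{-k})^{-1}$ after the change of variable $T \mapsto T/q$. Extracting the coefficient of $T^\alpha$ via the $q$-binomial theorem then yields the desired formula, and combining with multiplicativity completes the proof.
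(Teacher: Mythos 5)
Your outline is essentially correct, but note that the paper itself offers no proof of this statement: it is quoted verbatim as Corollary 3.8 of Cohen--Lenstra, so there is no internal argument to compare against. Measured against the original source, your route is a legitimate alternative. Cohen--Lenstra derive the formula from their ``fundamental identities'' by counting surjections from free modules $A^n \twoheadrightarrow G$ and passing to a limit in $n$, whereas you reduce to the local case and invoke the partition-indexed classification of finite modules over the discrete valuation ring $O_{F,\id{p}}$ together with the explicit automorphism count $\#\aut_{O_F}(G_\lambda)=q^{\sum_k(\lambda_k')^2}\prod_k\prod_{j=1}^{m_k}(1-q^{-j})$ and the generating-function identity
$$\sum_{\lambda}\frac{u^{|\lambda|}}{\#\aut(G_\lambda)}=\prod_{k\ge 1}\bigl(1-uq^{-k}\bigr)^{-1},$$
from which Euler's $q$-binomial theorem extracts the coefficient $q^{-\alpha}\prod_{k=1}^{\alpha}(1-q^{-k})^{-1}$ of $u^{\alpha}$. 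Both halves of your plan check out: the primary decomposition does give $\wl(\id{a})=\prod_i \wl(\id{p}_i^{\alpha_i})$ since distinct-support summands contribute independent automorphisms and Fitting ideals multiply, and the local identity is exactly the one in Hall's 1938 paper (which this article cites for computing $\#\aut_{O_K}(G)$); small cases such as $\alpha=2$, where $\frac{1}{q(q-1)}+\frac{1}{(q^2-1)(q^2-q)}=\frac{q}{(q-1)(q^2-1)}$, confirm the formula. Your approach is more combinatorially explicit but leans on the automorphism-order formula and Hall's identity as black boxes; the Cohen--Lenstra approach is more self-contained within their averaging formalism and generalizes more readily to the weighted densities used later in the heuristics. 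Either way, the two steps you would need to fill in (the automorphism count and the $q$-series identity) are both standard and correctly stated, so there is no gap in the plan.
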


\textbf{Notation.} We denote by $\Pi_p$ the characteristic function of the set of isomorphism classes of groups whose $p$-part is non-trivial.

\begin{proposition}[Example 5.10 p.47 \cite{HCL}]
 We denote by $\id{p}_1,\cdots,\id{p}_g$ the $p$-places of $O_F$, the average of $\Pi_p$ exists and we have
\begin{equation} \label{moy}
M_{l,0}(\Pi_p)=1-\prod_{i=1}^g \prod_{k \geq 1} \left( 1-\frac{1}{p^{kf_i}}\right),
\end{equation}
where $f_i$ is the degree of the residual extensions $O_F/\id{p}_i$ over $\mathbb{F}_p$.
\end{proposition}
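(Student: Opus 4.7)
The idea is to compute the complementary average $1-M_{l,0}(\Pi_p)=M_{l,0}(1-\Pi_p)$, which is the proportion of classes whose $p$-part is trivial, i.e.\ whose Fitting ideal $\id{a}(G)$ is coprime to all the $\id{p}_i$ above $p$. Using the function $\wl$ introduced just before the statement, I would rewrite both numerator and denominator of the defining quotient as sums indexed by the Fitting ideals themselves:
\begin{equation*}
M_{l,0}(1-\Pi_p)=\lim_{N\to\infty}\frac{\displaystyle\sum_{\substack{N(\id{a})\leq N\\ \id{a}\text{ coprime to }p}}\wl(\id{a})}{\displaystyle\sum_{N(\id{a})\leq N}\wl(\id{a})}.
\end{equation*}

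Next I would invoke the multiplicativity of $\wl$ coming from Proposition \ref{propw}. Grouping ideals according to their factorization into primes and passing to the limit (which is the routine convergence check hidden in Cohen--Lenstra's framework), each of the two sums factors as an Euler-type product over the prime ideals of $O_F$. The crucial point is that the local factor at a prime $\id{p}$ is
\begin{equation*}
\sum_{\alpha\geq 0}\wl(\id{p}^\alpha)=\sum_{\alpha\geq 0}\frac{1}{N(\id{p})^\alpha\prod_{k=1}^\alpha(1-N(\id{p})^{-k})}=\prod_{k\geq 1}\bigl(1-N(\id{p})^{-k}\bigr)^{-1},
\end{equation*}
the last equality being the $q$-binomial identity of Euler applied with $q=N(\id{p})^{-1}$.

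Taking the quotient, the factors at all primes $\id{p}\nmid p$ appear on both sides and cancel, leaving only the inverse of the local factors at the $\id{p}_i$ above $p$. Since $N(\id{p}_i)=p^{f_i}$, this gives
\begin{equation*}
M_{l,0}(1-\Pi_p)=\prod_{i=1}^{g}\prod_{k\geq 1}\left(1-\frac{1}{p^{kf_i}}\right),
\end{equation*}
and subtracting from $1$ yields the formula of the proposition.

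The main obstacle is not the algebra but the analytic justification: one must verify that the truncated sums $\sum_{N(\id{a})\leq N}\wl(\id{a})$, numerator and denominator alike, do admit in the limit the Euler product factorization used above, and that the local factors are the convergent $q$-series written down. Once this convergence step is granted (it is standard in the Cohen--Lenstra framework and carried out in \cite{HCL}), the rest is a direct computation.
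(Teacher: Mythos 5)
The paper offers no proof of this proposition: it is imported verbatim from Example 5.10 of \cite{HCL}, so there is nothing internal to compare your argument against. Your sketch is, however, a faithful reconstruction of the Cohen--Lenstra derivation. Passing to the complementary function $1-\Pi_p$, rewriting numerator and denominator as sums of $\wl(\id{a})$ over Fitting ideals, invoking the multiplicativity of $\wl$ from Proposition \ref{propw}, and evaluating the local factor by Euler's identity $\sum_{\alpha\geq 0} q^{\alpha}\prod_{k=1}^{\alpha}(1-q^{k})^{-1}=\prod_{k\geq 1}(1-q^{k})^{-1}$ with $q=N(\id{p})^{-1}$ is exactly the right computation, and the final cancellation leaving only the local factors at the primes above $p$ gives the stated formula since $N(\id{p}_i)=p^{f_i}$.

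One point deserves more care than your phrasing suggests: the two truncated sums do not individually converge to Euler products. Since $\wl(\id{a})\asymp 1/N(\id{a})$, both $\sum_{N(\id{a})\leq N}\wl(\id{a})$ and its restriction to ideals coprime to $p$ diverge like $\log N$, and the full Euler product $\prod_{\id{p}}\prod_{k\geq 1}(1-N(\id{p})^{-k})^{-1}$ diverges because of its $k=1$ factors. So the statement that ``each of the two sums factors as an Euler-type product in the limit'' is not literally true; the legitimate route is through the Dirichlet series $\sum_{\id{a}}\wl(\id{a})N(\id{a})^{-s}$, which does have an Euler product for $\mathrm{Re}(s)>0$ and a simple pole at $s=0$, followed by a Tauberian comparison of residues --- the ratio of residues is precisely the ratio of the finitely many local factors that differ between numerator and denominator. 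You flag this as the ``routine convergence check'' and defer it to \cite{HCL}, which is acceptable for a result the paper itself only cites, but the cancellation of the divergent common factors is the substance of the analytic step, not an afterthought.
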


\begin{corollary}
If the extension $F$ is a Galois extension, all residual degrees are equals to $f$ and in this case $$M_{l,0}(\Pi_p)=1-\prod_{k \geq 1}\left( 1-\frac{1}{p^{kf}}\right)^g.$$
\end{corollary}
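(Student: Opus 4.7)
The plan is to deduce this corollary as an immediate specialization of the preceding proposition. The only real content is the standard fact from algebraic number theory that in a Galois extension $F/\Q$, the Galois group $\gal(F/\Q)$ acts transitively on the set of primes above any rational prime $p$, and this transitive action forces the ramification index and the residual degree to be constant over that set.

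Concretely, I would first invoke the proposition just stated, which gives the formula
\begin{equation*}
M_{l,0}(\Pi_p)=1-\prod_{i=1}^g \prod_{k \geq 1} \left( 1-\frac{1}{p^{kf_i}}\right),
\end{equation*}
with $f_i$ the residue-field degree $[O_F/\id{p}_i : \mathbb{F}_p]$ at the $i$-th place above $p$. Then I would observe that since $F/\Q$ is Galois, for any two primes $\id{p}_i, \id{p}_j$ above $p$ there exists $\sigma \in \gal(F/\Q)$ with $\sigma(\id{p}_i)=\id{p}_j$, and $\sigma$ induces an isomorphism of residue fields $O_F/\id{p}_i \xrightarrow{\sim} O_F/\id{p}_j$ over $\mathbb{F}_p$. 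Hence $f_i=f_j$, so there is a common value $f$ with $f_i=f$ for all $i \in \{1,\dots,g\}$.

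Substituting $f_i = f$ for every $i$ into the proposition, the inner product $\prod_{k \geq 1}(1 - p^{-kf})$ no longer depends on $i$, so the outer product over $i=1,\dots,g$ is simply its $g$-th power, yielding
\begin{equation*}
M_{l,0}(\Pi_p)=1-\prod_{k \geq 1}\left( 1-\frac{1}{p^{kf}}\right)^{g},
\end{equation*}
as claimed. There is no real obstacle here: the argument is a one-line specialization, with the only non-trivial ingredient being the transitivity of the Galois action on primes above $p$, which is a standard result that I would simply cite.
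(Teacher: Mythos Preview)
Your proposal is correct and matches the paper's treatment: the paper states this corollary without proof, treating it as an immediate specialization of the preceding proposition once one knows that the residual degrees $f_i$ coincide in the Galois case. Your argument via transitivity of the Galois action on the primes above $p$ is exactly the standard justification for that equality, so there is nothing to add.
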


\begin{remark}
The real number $M_{l,0}(\Pi_p)$ is called the 0-average. This notion can be generalized to the u-average. The expression to compute the u-average is obtained by replacing $k$ by $k+u$ in the expression \ref{moy} of the 0-average.
\end{remark}

Let $\mathcal{K}$ be a set of number fields, cyclic of degree $l$, let  $K$ run through $\mathcal{K}$ and let $G$ be the $p$-part of the class group of $F$. We assume $l\neq p$. If we denote by $A=\mathbb{Z}[\Delta]/\sum_{g \in \Delta}g$, where $\Delta = \gal(K/\mathbb{Q})$, it is easy to see that $G$ is a finite $A$-module. As $\Delta$ is cyclic of order $l$, then $G$ is an $O_l$-module. Following the Cohen-Lenstra Heuristics we give the assumptions.

\begin{assume}[Assumptions p.54 \cite{HCL}] Recall that $l=[\Ko:\mathbb{Q}]$, then we have: 
\begin{enumerate}
 \item (Complexe quadratic case) If $r_1=0$, $r_2=1$ then the proportion of $G$ which are non-trivial is the 0-average of $\Pi_p$, restricted to $O_l$-modules of order prime to $l$.
\item (Totally real case) If $r_1=n$, $r_2=0$ then the proportion of $G$ which are non-trivial is the 1-average of $\Pi_p$, restricted to $O_l$-modules of order prime to $l$.
\end{enumerate}

\end{assume}

\subsection{Somes numerical results}

\subsubsection{Case of the quadratic fields}
We observed that in the case of real quadratic fields the proportion of fields with non-trivial $\Z_p$-torsion of $\X$ was a 0-average, and a 1-average for the imaginary quatratic fields. We will explain why this phenomenon is consistent with Cohen-Lenstra Heuristics in Section \ref{explain}.\\
We consider all quadratic fields $\Q(\sqrt{d})$ with $d$ square-free and $ 0 < d \leq 10^9$. Then we compute the proportion of fields with non-trivial $\tor$. We denote this proportion by $f_{\ex}$. The relative error $|f_{\ex}-M_{2,0}(\Pi_p)|/M_{2,0}(\Pi_p)$ is denoted by $\delta$. We remark that $\delta$ tends to 0 if we increase the numbers of fields whose torsion we compute, except for the case $p$=2 and 3. We explain this discrepancy with 2 and 3 in Section \ref{explain}.
\label{table}
\begin{center}
\begin{tabular}[center]{|*{4}{c|}}
  \hline
   $p$ & $M_{2,0}(\Pi_p)$ & $f_{\ex}$ & $\delta$ \\
\hline
   2   &   0,71118       &  0,93650  &    0,31683    \\
  \hline
   3   &   0,43987       &  0,50120  &    0,13942    \\
  \hline
   5   &   0,23967       &  0,23854  &    0,00470  \\
  \hline
   7   &   0,16320       &  0,16280  &    0,00247     \\
  \hline
  11   &   0,09916       &  0,09893  &    0,00243   \\
  \hline
  13   &   0,08284       &  0,08266 &     0,00212    \\
  \hline
  17   &   0,06228       &  0,06214 &     0,00233   \\
  \hline
  19   &   0,05540       &  0,05526 &    0,00260   \\
  \hline
  23   &    0,04537      &  0,04527 &    0,00207    \\
  \hline
  29   &   0,03375       &  0,03560   &      0,00193     \\
  \hline
   31   &   0,03330       &  0,03323   &     0,00219      \\
  \hline
   37   &   0,02776       &  0,02770   &     0,00198      \\
  \hline
  41   &   0,02499       &  0,02493   &     0,00207      \\
  \hline
  43   &   0,02380       &  0,02376   &     0,00152      \\
  \hline
  47   &   0,02173       &  0,02168   &     0,00207      \\
 \hline
\end{tabular}
\end{center}
\vspace{5mm}

We consider now the quadratic field $\Q(\sqrt{d})$ with $-10^9 \leqslant d \leqslant 0$. One uses the 1-average denoted by $M_{2,1}(\Pi_p)$.

\vspace{5mm}

\begin{center} 
\begin{tabular}[center]{|*{4}{c|}}
  \hline
   $p$ & $M_{2,1}(\Pi_p)$ & $f_{\ex}$ & $\delta$ \\
\hline
   2  &   0,42235      &  0,93650  &    1.12734      \\
  \hline
   3   &   0,15981       &  0,25718  &    0,60926      \\
  \hline
   5   &   0,04958       &  0,04909  &    0,00989  \\
  \hline
   7   &   0,02374       &  0,02365  &    0,00374     \\
  \hline
  11   &   0,00908       &  0,00905  &    0,00416   \\
  \hline
  13   &   0,00641       &  0,00638 &     0,00360    \\
  \hline
  17   &   0,00368       &  0,00365 &     0,00445   \\
  \hline
  19   &   0,00292       &  0,00291 &    0,00589   \\
  \hline
  23   &    0,00198      &  0,00197 &    0,00510    \\
  \hline
  29   &   0,00123       &  0,00122   &      0,00916     \\
  \hline
   31   &   0,00108       &  0,00107   &     0,00929      \\
  \hline
   37   &   0,00075       &  0,00074   &     0,00813      \\
  \hline
  41   &   0,00061      &  0,00060   &     0,00982      \\
  \hline
  43   &   0,00055       &  0,00055   &     0,00998      \\
  \hline
  47   &   0,00046       &  0,00046   &     0,01626      \\
 \hline
\end{tabular} 
\end{center}

\vspace{5mm}

We have also computed the proportions for cubic fields, with the program of K. Belabas \cite{belabas}, and for quintic fields using the tables which are available on the website dedicated to \pari system \cite{pari}. Then we consider the distribution of torsion modules with respect to invariants factors that will not be presented here, for the sake of brevity. To compute $\#\aut_{O_\Ko}(G)$ we use \cite{hall}.\\

\subsubsection{Explanation of numerical results} \label{explain}
In this section we explain our numerical results. Looking at the two tables in \S \ref{table} we remark that the proportion $f_{\ex}$ for real quadratic fields seems to be a 0-average, and a 1-average for the imaginary quadratic. We remark also that the default $\delta$ for $p=2, 3$ increases with the number of fields computed. To explain these phenomena we recall a computation of Gras \cite{gra1} p. 94-97.
Let $k$ be a number field, we denote by $K=k(\zeta_p)$ and $\omega$ the idempotent associated with the action of $\gal(K/k)$ on $\mu_p$.

\begin{theorem}[Corollaire 1 p. 96 \cite{gra1}] 
Let $p$ be a prime, $p \neq 2$.
If $\mu_p \not\subset k$ then the torsion of $\X$ is trivial if and only if any prime ideal of $k$ dividing $p$ is totally split in $K/k$ and $(Cl_K)^\omega$ is trivial, where  $Cl_K$ is the $p$-part of the class group of $K$.
\end{theorem}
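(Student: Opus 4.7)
My plan is to exploit that $[K:k]$ is prime to $p$ to pass to Kummer theory over $K$, where $\mu_p$ is available, and then to read off the two conditions as the vanishing of two independent pieces of a Kummer module decomposition.

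Let $\Delta:=\gal(K/k)$. Since $\mu_p\not\subset k$ and $p\ne 2$, $\Delta$ is non-trivial cyclic of order $d$ dividing $p-1$, hence coprime to $p$; consequently $\Z_p[\Delta]$ is semi-simple, every $\Z_p[\Delta]$-module $N$ splits canonically as $N=\bigoplus_\chi N^\chi$ over the $\Q_p$-characters of $\Delta$, and the functor $N\mapsto N^\chi$ is exact. Writing $M_K:=M\cdot K$ and $\X_K:=\gal(M_K/K)$, inflation-restriction identifies $\X$ with $(\X_K)^\Delta=(\X_K)^{1}$ (the trivial-character component), and hence $\tor=(\tor_K)^{1}$. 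By Nakayama's lemma applied to the finitely generated $\Z_p$-module $\tor$, triviality of $\tor$ is equivalent to $\dim_{\mathbb{F}_p}(\X/p\X)=r_2+1$.

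Because $\mu_p\subset K$, Kummer theory provides a $\Delta$-equivariant perfect pairing
$$\X_K/p\X_K\;\times\;V_K/(K^\times)^p\;\longrightarrow\;\mu_p,\qquad V_K:=\{\alpha\in K^\times:v(\alpha)\in p\Z\ \text{for every place}\ v\nmid p\},$$
since $K(\sqrt[p]{\alpha})/K$ is unramified outside $p$ exactly when $\alpha\in V_K\cdot(K^\times)^p$. The group $\Delta$ acts on $\mu_p$ through the character associated with the idempotent $\omega$, so the trivial-character component on the left is $\mathbb{F}_p$-dual to the $\omega$-component on the right. Combining this with the standard exact sequence
$$0\to E'_K(K^\times)^p/(K^\times)^p\to V_K/(K^\times)^p\to \mathrm{Cl}_K[p]\to 0,$$
where $E'_K$ denotes the $p$-units of $K$, and taking $\omega$-parts (which is exact), the vanishing of $(\mathrm{Cl}_K[p])^\omega$---equivalent, by Nakayama applied to the finite $\Z_p$-module $(\mathrm{Cl}_K)^\omega$, to $(\mathrm{Cl}_K)^\omega=0$---accounts for one of the two conditions in the statement.

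The remaining step is to control $\dim_{\mathbb{F}_p}(E'_K/p)^\omega$: it must equal $r_2+1$ when the prescribed condition on the $p$-primes of $k$ in $K/k$ holds, and exceed this value otherwise by a non-negative amount detected by the decomposition subgroups of those primes in $\Delta$. This is a Herbrand-style character computation on $E'_K\otimes\Q_p$, which decomposes under $\Delta$ via the Dirichlet regulator representation of archimedean places together with the direct contribution of the $p$-places; the $\omega$-isotypic part is extracted by Frobenius reciprocity against the induced trivial characters of the local decomposition groups. The ``default'' contribution matches the $\Z_p$-rank $r_2+1$ predicted by Leopoldt, while the excess is pinned down precisely by the local behaviour at $p$-primes. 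This local character bookkeeping is the main obstacle; once it is completed, the theorem follows because the two independent vanishings (of $(\mathrm{Cl}_K)^\omega$ and of the local excess) are exactly those needed to force $\dim_{\mathbb{F}_p}(\X/p\X)=r_2+1$.
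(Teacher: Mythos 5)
This statement is quoted from Gras \cite{gra1} (Corollaire 1, p.~96); the paper gives no proof of it, so your attempt can only be judged against the cited source and against internal consistency. Your overall strategy --- semisimple decomposition under $\Delta=\gal(K/k)$ (of order prime to $p$), Kummer duality twisting the trivial component of $\X_K/p\X_K$ into the $\omega$-component of the radical, and the exact sequence relating the radical to $p$-units and the class group --- is indeed the standard reflection argument behind Gras's result. But the proof is not complete, and the part you leave out is exactly where the content of the theorem sits.

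You explicitly defer ``the local character bookkeeping'' for $\dim_{\mathbb{F}_p}(E'_K/p)^\omega$, calling it the main obstacle; that computation \emph{is} the theorem. Since $\omega$ is faithful on $\Delta$, the $\omega$-part of $\mathrm{Ind}_{\Delta_w}^{\Delta}\mathbf{1}$ is nonzero exactly when the decomposition group $\Delta_w$ is trivial: real places of $k$ contribute $0$, complex places contribute $r_2$, $\mu_p$ contributes $1$, and each prime of $k$ above $p$ contributes $1$ \emph{precisely when it splits completely} in $K/k$. Hence $\dim_{\mathbb{F}_p}(\X/p\X)=(r_2+1)+\#\{v\mid p\ \text{totally split}\}+\dim\bigl(\mathrm{Cl}^S_K[p]\bigr)^\omega$, so triviality of $\tor$ forces that \emph{no} prime above $p$ splits completely --- which is how the paper itself applies the theorem just below it (``they are not totally split, and so the torsion is trivial if and only if\dots''; ``the ideal of $k$ above $p$ is totally split\dots, so the torsion is non-trivial''). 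Your sketch leaves the direction of this condition undetermined, and the statement as transcribed in the paper reads as the opposite condition, so completing the bookkeeping is not optional. Two further gaps: (i) your reduction ``$\tor=0\iff\dim_{\mathbb{F}_p}(\X/p\X)=r_2+1$'' needs Leopoldt's conjecture for $k$ in the forward direction (if Leopoldt fails the free rank already exceeds $r_2+1$ even with trivial torsion), and Gras's corollary is stated under that hypothesis; (ii) the cokernel of your radical sequence is the $p$-split class group $\mathrm{Cl}^S_K[p]$ (classes of primes above $p$ killed), not $\mathrm{Cl}_K[p]$; its $\omega$-part coincides with that of $\mathrm{Cl}_K$ exactly when no prime above $p$ splits completely, so this identification can only be made after the splitting condition has been imposed, not before.
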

In the case of quadratic fields, if $p > 3 $ then  $\mu_p \not\subset k$ and the ramification index of $p$ in $\mathbb{Q}(\zeta_p)/\mathbb{Q}$ is $p-1$; then all prime ideals of $k$ dividing $p$ ramify in $K$. Therefore they are not totally split, and so the torsion is trivial if and only if $(Cl_K)^\omega$ is trivial. So when $k$ is a real quadratic field the computation of $\tor$ reduces to the computation of a class group of imaginary quadratic field and we use the 0-average following Cohen-Lenstra Heuristics. In the case of imaginary quadratic the remark \cite{gra1} p.96-97 explains the 1-average. In the case $p=3$, if $d\equiv 6$ mod 9 then the ideal of $k$ above $p$ is totally split in $K$, so the torsion is non-trivial. It explains why the frequency obtained is greater.
If we consider the other average $M'_2(\Pi_3)=M_{2,0}(\Pi_3) \times \frac{7}{8}+\frac{1}{8}$, then we obtain in the real case

\vspace{5mm}
\begin{center}
\begin{tabular}[center]{|*{4}{c|}}
  \hline
   $N$ & $M'_2(\Pi_3)$ & $f_{\ex}$ & $\delta$ \\
\hline
   $10^6$   &   0,50989       &  0,48094 &    0,05678      \\
  \hline
   $10^7$   &   0,50989       &  0,49054  &    0,03794      \\
  \hline
   $10^8$   &   0,50989       &  0,49697  &    0,02533  \\
  \hline
   $10^9$   &   0,50809       &  0,50120  &    0,01704     \\
  \hline
\end{tabular}
 
\vspace{5mm}

We now make the computation without the case $d\equiv 6$ mod 9.\\

\vspace{5mm}
\begin{tabular}[center]{|*{4}{c|}}
  \hline
   $N$ & $M_{2,0}(\Pi_3)$ & $f_{\ex}$ & $\delta$ \\
\hline
   $10^6$   &   0,43987       &  0,40679   &    0,07521   \\
  \hline
   $10^7$   &   0,43987       &  0,41776   &    0,05027      \\
  \hline
   $10^8$   &   0,43987       &  0,42511  &    0,03356  \\
  \hline
   $10^9$   &   0,43987       &  0,42995  &    0,02257     \\
  \hline
\end{tabular}
\end{center}
\vspace{5mm}
It remains to study the 9-rank in the case where $d\equiv 6$ mod 9, and to try and find density formulas for the 9-rank. 
Finally, the discrepancy in the case $p=2$ is explained by genus theory. Indeed, if the discriminant is divided by enough primes then the torsion is not trivial. This explains why the frequency tends to 1.\\
\\
\textbf{Acknowledgement.} It is our pleasure to thank Bill Allombert for his help with \pari \ computations, Christophe Delaunay for his relevant suggestions
and the anonymous referees who have devoted their time to study the previous versions of this article and suggested many improvements.

\bibliographystyle{smfalpha}

\begin{thebibliography}{PAR13}

\bibitem[Bel97]{belabas}
{\scshape K. Belabas} -- \textit{A fast algorithm to compute cubic fields},
  \emph{Math. Comp.} \textbf{115} (1997), no.~66, p.~1213--1237.

\bibitem[CL84]{HCL}
{\scshape H.~Cohen  H.~W. Lenstra, Jr.} -- \textit{
  Heuristics on class groups of number fields}, Number theory,
  {N}oordwijkerhout 1983 ({N}oordwijkerhout, 1983), Lecture Notes in Math.,
  vol. 1068, Springer, 1984, p.~33--62.

\bibitem[Del07]{TCD}
{\scshape C.~Delaunay} -- \textit{Heuristics on class groups and on
  {T}ate-{S}hafarevich groups: the magic of the {C}ohen-{L}enstra
  heuristics}, Ranks of elliptic curves and random matrix theory, London
  Math. Soc. Lecture Note Ser., vol. 341, Cambridge Univ. Press, 2007,
  p.~323--340.

\bibitem[Gra82]{gra1}
{\scshape G.~Gras} -- \textit{Groupe de galois de la $p$-extension abélienne
  $p$-ramifiée maximale d'un corps de nombres.}, \emph{J. Reine Angew.
  Math} \textbf{333} (1982), p.~86--132.

\bibitem[Gra03]{gra2}
{\scshape G.~Gras} , \textit{Class field theory}, \emph{Springer-Verlag} (2003).

\bibitem[Hal38]{hall}
{\scshape P.~Hall} -- \textit{A partition formula connected with {A}belian
  groups}, \emph{Comment. Math. Helv.} \textbf{11} (1938), no.~1,
  p.~126--129.

\bibitem[PAR13]{pari}
The PARI~Group -- Bordeaux, \emph{Pari/gp, version {\tt 2.6.0}}, 2013,
  available from \url{http://pari.math.u-bordeaux.fr/}.

\bibitem[Ser68]{serre}
{\scshape J.-P. Serre} -- \emph{Corps locaux}, Hermann, Paris, 1968,
  Deuxi{\`e}me {\'e}dition, Publications de l'Universit{\'e} de Nancago, No.
  VIII.

\end{thebibliography}

\noindent Fredéric PITOUN,\\
27 Avenue du 8 mai 1945,\\
11400 Castelnaudary, FRANCE.\\
frederic.pitoun@free.fr\\ 
\\
Firmin VARESCON,\\
Laboratoire de mathématiques de Besançon, CNRS UMR 6623,\\
Université de Franche Comté, 16 Route de Gray, 25020 Besançon Cédex, FRANCE.\\
firmin.varescon@univ-fcomte.fr\\

\end{document}